\newtheorem{defi}{Definition}[section]%Indicar si la numeracion sigue secciones, subsecciones, ...
\newtheorem{lem}[defi]{Lemma}
\newtheorem{theo}[defi]{Theorem}
\newtheorem{pro}[defi]{Proposition}
\newtheorem{rem}[defi]{Remark}
\DeclareMathOperator{\R}{\mathbb{R}}
\DeclareMathOperator{\C}{\mathbb{C}}
\DeclareMathOperator{\T}{\mathbb{T}}
\DeclareMathOperator{\D}{\mathbb{D}}
\title[]{Vortex patches choreography for active scalar equations}
\author[C. Garc\'ia]{Claudia Garc\'ia}
\address{Departamento de Matem\'atica Aplicada and Excellence Research Unit ``Modeling Nature'' (MNat), Facultad de Ciencias. Universidad de Granada\\ 18071-Granada, Spain. \& IRMAR, Universit\'e de Rennes 1\\ Campus de
Beaulieu\\ 35~042 Rennes cedex\\ France}
\email{claudiagarcia@ugr.es}
\thanks{This work has been partially supported by the MINECO-Feder (Spain) research grant number RTI2018--098850-B--I00, the Junta de Andaluc\'ia (Spain) project FQM 954 and the MECD (Spain) research grant FPU15/04094 (C.G.).}}
\begin{document}

\date{\today}

\begin{abstract}
This paper deals with the existence of $N$ vortex patches located at the vertex of a regular polygon with $N$ sides that rotate around the center of the polygon at a constant angular velocity. That is done for Euler and $\textnormal{(SQG)}_\beta$ equations, with $\beta\in(0,1)$, but may be also extended to more general models. The idea is the desingularization of the Thomsom polygon for the $N$ point vortex system, that is, $N$ point vortices located at the vertex of a regular polygon with $N$ sides. The proof is based on the study of the contour dynamics equation combined with the application of the infinite dimensional Implicit Function theorem and the well--chosen of the function spaces.
\end{abstract}

\maketitle

\section{Introduction}
This work deals with the dynamics of $N$ vortex patches located at the vertex of a  regular polygon with $N$ sides for Euler and generalized surface quasi--geostrophic equations. The motivation comes from the Thomson polygon for the $N$ point vortex problem which is a choreography for this system: the evolution of the points is a rotation of constant angular velocity.

  The generalized surface quasi--geostrophic equations describing the evolution of the potential temperature $\theta$ read as
\begin{eqnarray}	      \label{gSQG}     
       \left\{\begin{array}{ll}
          	\theta_t+(v\cdot \nabla) \theta=0, &\text{ in $[0,+\infty)\times\mathbb{R}^2$}, \\
         	 v=-\nabla^\perp (-\Delta)^{-1+\frac{\beta}{2}}\theta,&\text{ in $[0,+\infty)\times\mathbb{R}^2$}, \\
         	 \theta(0,x)=\theta_0(x),& \text{ with $x\in\mathbb{R}^2$}.
       \end{array}\right.
\end{eqnarray}
In this system $v$ refers to the velocity field, $\nabla^\perp=(-\partial_2,\partial_1)$ and $\beta\in[0,2)$. The velocity field is linked to the potential temperature $\theta$ via the operator $(-\Delta)^{-1+\frac{\beta}{2}}$ agreeing with
$$
(-\Delta)^{-1+\frac{\beta}{2}}\theta(x)=\int_{\R^2}K_{\beta}(x-y)\theta(y)dy,
$$
where
$$
K_{\beta}(x)=\left\{
\begin{array}{ll}
-\frac{1}{2\pi}\log|x|,&\beta=0,\\
\frac{C_\beta}{2\pi}\frac{1}{|x|^\beta},&\beta\in(0,2),
\end{array}
\right.
$$
and $C_\beta=\frac{\Gamma(\beta/2)}{2^{1-\beta}\Gamma\left(\frac{1-\beta}{2}\right)}$. 

The case $\beta=0$ corresponds to the Euler equations and $\beta=1$ to the surface quasi--geostrophic model. This model was proposed by C\'ordoba et al. in \cite{Cordoba} as an interpolation between Euler and surface quasi--geostrophic equations. See \cite{Constantin-Majda-Tabak} for its relation with the three dimensional incompressible Euler equations.

In this work we will be interested in the case $\beta\in[0,1)$ and we will distinguish between Euler equations and $\textnormal{(SQG)}_{\beta}$ for $\beta\in(0,1)$. Let us present here the Euler equations where $\omega$ stands for the vorticity

\begin{eqnarray}	      \label{Euler}     
       \left\{\begin{array}{ll}
          	\omega_t+(v\cdot \nabla) \omega=0, &\text{ in $[0,+\infty)\times\mathbb{R}^2$}, \\
         	 v=K*\omega,&\text{ in $[0,+\infty)\times\mathbb{R}^2$}, \\
         	 \omega(0,x)=\omega_0(x),& \text{ with $x\in\mathbb{R}^2$},
       \end{array}\right.
\end{eqnarray}
where $K(x)=\frac{1}{2\pi}\frac{x^\perp}{|x|^2}$.

Relative equilibria for these active scalar equations have been studied in the last few years. They survive in the class of Yudovich solutions \cite{Yudovich} (bounded and integrable functions), which are global for the Euler equations but only local for the $\textnormal{(SQG)}_{\beta}$ equations due to the singularity of the velocity field. For the case of vortex patches solutions, that is, $\theta_0(x)={\bf 1}_{D}$, for a bounded simply connected smooth domain $D$, there is a unique local solution in the patch form $\theta(t,x)={\bf 1}_{D_t}$. Furthermore, the boundary $\partial D_t$ is described by the contour dynamics equations
$$
\partial_t z_t(w)=\int_{\T}K_{\beta}(z_t(w)-z_t(\xi))z_t'(\xi)d\xi,
$$
where $z_t:\T\rightarrow \partial D_t$ is a lagrangian parametrization. In \cite{Chemin}, Chemin proved the global persistence of the boundary regularity in $\mathscr{C}^{1,\alpha}$, with $\alpha\in(0,1)$, for Euler equations, see also \cite{B-C, Serf}. However, a local persistence result is only known for $\textnormal{(SQG)}_{\beta}$ with $\beta\in(0,1)$, see \cite{CordobaCordoba, Gancedo, Kiselev, Rodrigo}.

We shall focus on relative equilibria or V--states, which consist in single or multiple patches moving without changing shape. The first known examples goes back to the stationary Rankine vortex (the circular patch) and Kirchhoff \cite{Kirchhoff} ellipses (which rotates at a constant angular velocity $\Omega=\frac{ab}{(a+b)^2}$, where $a$ and $b$ are the semi--axes of the ellipse). Later, Deem and Zabusky \cite{Deem-Zabusky} gave some numerical observations of the existence of more rotating patches with $m$--fold symmetry. Using bifurcation theory, Burbea \cite{Burbea} proved analytically the existence of these V--states close to the Rankine vortex. Following the approach of Burbea, there has been several works concerning the existence of V--states not only for Euler equations but also for $\textnormal{(SQG)}_{\beta}$ equations. We refer to \cite{Cas-Cor-Gom, Hassa-Hmi, HmidiMateuVerdera} for single rotating patches, \cite{DelaHoz-Hassainia-Hmidi, DelaHozHmidiMateuVerdera} for doubly--connected V--states, \cite{H-M} for corotating and counter--rotating vortex pairs, \cite{G-KVS} for the existence of K\'arm\'an Vortex Street structures and \cite{Cas0-Cor0-Gom, CastroCordobaGomezSerrano, C-C-GS-2,D-H-R, GHM, GHS, GPSY, HMW, Hmiditrivial, HmidiMateu} for other related works.

The $N$ point vortex system is found when the area of a patch shrink to zero. We will be interested in choreographies for the $N$ point vortex problem which are solutions where the $N$ vortices follow the same patch. Moreover, these choreographies can help later with the study of relative equilibria for active scalar equations as Euler or $\textnormal{(SQG)}_{\beta}$ equations. 

In \cite{Marchioro}, Marchioro and Pulvirenti proved the desingularization of point vortices for Euler equations. That is, they found smooth solutions $\omega_{\varepsilon}$ for Euler equations, suitably $\varepsilon$-concentrated around the points. We refer also to \cite{DavilaPinoMussoWei, Smets} for more information about this topic. Recently, Ao, D\'avila, Del Pino, Musso and Wei \cite{Ao} achieved also the desingularization of points for the $\textnormal{(SQG)}_{\beta}$ equations, with $\beta\in(0,1)$, for the special case of vortices traveling with constant speed along one axis or rotating with constant angular velocity. With different techniques, Hmidi and Mateu studied the desingularization of a pair of vortices by studying {\it small} vortex patches around each point. Note that their work deals with Euler and $\textnormal{(SQG)}_{\beta}$ equations, for $\beta\in(0,1)$. Following similar techniques, the author in \cite{G-KVS} found K\'arm\'an Vortex Street structures for different active scalar equations which include also Euler and $\textnormal{(SQG)}_{\beta}$.

Here we follow the approach developed in \cite{G-KVS, H-M} to study the desingularization of the $N$ point vortices located at the vertex of a regular polygon with $N\geq 2$ sides, that is,
\begin{equation}\label{intro-points}
z_m(0)=e^{\frac{i2\pi m}{N}}z_0(0),
\end{equation}
for any $m=0,\dots,N-1$ and where we take $z_0(0)=l\in\R$ located in the real axis. Assume that the evolution of the points is given by a general interaction  $G:\R\rightarrow\R$ smooth off zero, meaning
\begin{align}
z_m'(t)=&\sum_{m\neq k=0}^{N-1}\nabla^\perp_{z_m}G(|z_m(t)-z_k(t)|),\label{intro-evol-points-general}\\
z_m(0)=&e^{\frac{i2\pi m}{N}}z_0(0),\nonumber
\end{align}
for any $m=0,\dots,N-1$. Note that the classical Newtonian interaction stands for $G=\frac{1}{2\pi}\ln|\cdot|$ and $\textnormal{(SQG)}_{\beta}$ for $G=C_\beta\frac{1}{|\cdot|^\beta}$. We can check that the evolution of \eqref{intro-points} due to \eqref{intro-evol-points-general} is given by a rotation of constant angular velocity that we will denoted by $\Omega_0$, that is,
$$
z_m(t)=e^{i\Omega_0 t}z_m(0).
$$
The stability of the $N$ polygon (also called Thomson polygon) has been studied in the last years, we refer to \cite{Boatto, Calleja,KurakinYudovich}. They concluded that Thomson polygon is stable for $N\leq7$ but unstable for $N\geq 8$. 

Hence, the idea of this work is to find small enough vortex patches around \eqref{intro-points} evolving also as a rotation with some constant angular velocity. Our main result reads as follows.

\begin{theo}\label{intro-th}
Consider $l\in\R^\star$ and $N\geq 2$. Then, there exists $\varepsilon_0>0$ with the following property. For all $\varepsilon\in(0,\varepsilon_0)$, there is a simply--connected bounded domain $D^\varepsilon$, with center of masses $l$, such that
$$
\theta_0(x)=\frac{1}{\pi \varepsilon^2}\sum_{m=0}^{N-1}{\bf 1}_{e^{\frac{i 2\pi m}{N}}D^\varepsilon} (x),
$$
defines a rotating solution of \eqref{gSQG}, for $\beta\in[0,1)$, with some constant angular velocity $\Omega(\varepsilon)$. Moreover, $D^\varepsilon$ is at least $\mathscr{C}^1$.
\end{theo}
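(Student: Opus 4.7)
The plan is to reformulate the rotating patch condition as a nonlinear functional equation $F(\varepsilon, \Omega, f) = 0$ and apply the infinite dimensional Implicit Function Theorem at $\varepsilon = 0$. The $N$-fold rotational symmetry of the Thomson polygon reduces the problem to finding a single patch $D^\varepsilon$ close to a disc of radius $\varepsilon$ centered at $l$; the remaining $N-1$ patches are then obtained by rotation through multiples of $2\pi/N$, so only one boundary has to be parametrized and the symmetry is imposed on the function space.

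First I would write the boundary of $D^\varepsilon$ in the form
$$z(w) = l + \varepsilon\, w\bigl(1 + f(w)\bigr), \qquad w \in \mathbb{T},$$
with $f$ a small real valued perturbation in a Hölder-type space carrying the symmetries needed to keep the full configuration $N$-fold symmetric and the center of mass of $D^\varepsilon$ equal to $l$ (these constraints eliminate the zero mode and the translation mode of $f$). The rotating patch condition, expressed through the contour dynamics equation applied to $e^{i\Omega t} z(w)$, amounts to
$$F(\varepsilon, \Omega, f)(w) := \textup{Im}\Bigl\{\bigl(v(z(w)) - i\Omega z(w)\bigr)\,\overline{w\, z'(w)}\Bigr\} = 0 \text{ on } \mathbb{T},$$
where $v$ is the velocity field induced by $\bigcup_{m=0}^{N-1} e^{\frac{i 2\pi m}{N}} D^\varepsilon$. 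After rescaling in $\varepsilon$ (to absorb the homogeneity of $K_\beta$) and separating the self-interaction of $D^\varepsilon$ from the contributions of the other patches, expanded around $l$ in powers of $\varepsilon$, this functional should extend smoothly to $\varepsilon = 0$. At $\varepsilon = 0$ the self-interaction reduces to the Rankine/disc velocity, the cross contributions collapse to the point-vortex velocities along the Thomson polygon, and $F(0, \Omega_0, 0) = 0$ with $\Omega_0$ the angular velocity of the point vortex polygon.

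The main obstacle is Step 3: verifying that the linearization $\partial_f F(0, \Omega_0, 0)$ is an isomorphism between the chosen function spaces. Using the Fourier expansion $f(w) = \sum a_n w^n$ on $\mathbb{T}$, this operator acts essentially as a Fourier multiplier determined by the Burbea-type self-interaction linearization (and by the smooth cross-terms which contribute only to the low modes). One then has to check, adapting the analysis of \cite{G-KVS, H-M}, that the multiplier coefficients are non-zero on all modes compatible with the imposed symmetry; the symmetry and center-of-mass constraints remove the only would-be kernel modes, while the parameter $\Omega$ is retained precisely to offset the translation mode. The delicate part is choosing the function spaces (of $\mathscr{C}^{1,\alpha}$-type, adapted to the regularity of $K_\beta$ in the $\textnormal{(SQG)}_\beta$ case), and establishing that $F$ is $C^1$ jointly in $(\varepsilon, \Omega, f)$ uniformly as $\varepsilon \to 0$; the singularity of $K_\beta$ for $\beta \in (0,1)$ makes this the technical heart of the proof. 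Once these are established, the Implicit Function Theorem yields a branch $(\Omega(\varepsilon), f(\varepsilon))$ on $(0, \varepsilon_0)$ with $f(0) = 0$ and $\Omega(0) = \Omega_0$, producing the desired patches $D^\varepsilon$ with the claimed $\mathscr{C}^1$ regularity.
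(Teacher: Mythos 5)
Your overall strategy coincides with the paper's: parametrize one patch by a small conformal perturbation of a disc of radius $\varepsilon$ centred at $l$, encode the rotating--patch condition in a functional $F(\varepsilon,\Omega,f)$, verify that the Thomson point--vortex datum $(\Omega_0,0,0)$ is a root, and apply the Implicit Function Theorem in H\"older-type spaces of symmetric Fourier series, with the kernel singularity handled by potential-theoretic estimates as in \cite{H-M,G-KVS}. The one structural difference is how the $\sin\theta$ (translation) mode is removed: you keep $\Omega$ as an unknown and invert the joint linearization $(\partial_\Omega F,\partial_f F)$, whereas the paper slaves $\Omega$ to $(\varepsilon,f)$ through the explicit formula \eqref{Omega-exp} (Proposition \ref{Prop-omega}) so that $\tilde F(\varepsilon,f)=F(\Omega(\varepsilon,f),\varepsilon,f)$ lands in the space $Y_\alpha$ of modes $n\geq 2$ and the IFT is applied in $f$ alone. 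These two devices are equivalent \emph{provided} $\partial_\Omega F$ at the base point actually spans the $\sin\theta$ direction.

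That proviso is where your write-up has a genuine gap: the $\varepsilon$-scaling of the perturbation in your ansatz $z(w)=l+\varepsilon w\left(1+f(w)\right)$ is not the right one. The self-induced velocity of a patch of diameter $\varepsilon$ and unit mass is of order $\varepsilon^{-1}$, while the rotation term $\Omega z^\perp$ and the interaction with the other $N-1$ patches are of order one; balancing them forces the shape correction to enter at relative order $\varepsilon$ (resp.\ $\varepsilon^{1+\beta}$ for $\textnormal{(SQG)}_{\beta}$), which is why the paper takes $\Phi(w)=\varepsilon(w+\varepsilon f(w))$ in \eqref{conformal-map} and $\Phi(w)=\varepsilon(w+\varepsilon^{1+\beta}f(w))$ in \eqref{conf-map-g}. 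With your normalization, once the equation is rescaled so that the self-interaction linearization survives at $\varepsilon=0$, both the $\Omega$-term and the cross-terms carry an extra factor of $\varepsilon$ and vanish in the limit: then $F(0,\Omega,0)=0$ for every $\Omega$, $\partial_\Omega F(0,\Omega,0)=0$, the limiting linearization is the Burbea operator at the disc with zero angular velocity, whose $n=1$ coefficient vanishes, and the joint linearization fails to be onto the $\sin\theta$ mode --- precisely the mode you were counting on $\Omega$ to produce. So the ``rescaling in $\varepsilon$'' you invoke is not a routine normalization but the point at which the ansatz must be calibrated. Once the extra power of $\varepsilon$ is inserted, one gets $\partial_\Omega F(\Omega_0,0,0)(e^{i\theta})=-l\sin\theta\neq 0$, the linearization in $f$ becomes $\frac{1}{2\pi}\mathrm{Im}[h']$ for Euler (or the multiplier with coefficients $\gamma_n$ computed in the proof of Theorem \ref{Th-sqg} for $\textnormal{(SQG)}_{\beta}$), and your argument closes.
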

 
For the sake of simplicity, we will focus only on Euler and $\textnormal{(SQG)}_{\beta}$ equations, but same result happens also for more general models of the type
\begin{eqnarray}	      \label{general}     
       \left\{\begin{array}{ll}
          	q_t+(v\cdot \nabla) q=0, &\text{ in $[0,+\infty)\times\mathbb{R}^2$}, \\
         	 v=\nabla^\perp G* q,&\text{ in $[0,+\infty)\times\mathbb{R}^2$}, \\
         	 q(t=0,x)=q_0(x),& \text{ with $x\in\mathbb{R}^2$},
       \end{array}\right.
\end{eqnarray}
where $G$ is radial and smooth off zero. See \cite{G-KVS} for the desingularization of K\'arm\'an Vortex Street structures in the general system \eqref{general}.

In what follows, we briefly explain the idea behind Theorem \ref{intro-th} for Euler equations, and same strategy is later done for $\textnormal{(SQG)}_{\beta}$ equations. Our main task is to find simply connected bounded domains $D_m^\varepsilon$, for $m=0, \dots, N-1$, $N\geq 2$ and $\varepsilon>0$, such that the evolution of
\begin{equation}\label{intro-omega0}
\omega_{0,\varepsilon}(x)=\sum_{m=0}^{N-1}{\bf 1}_{D_m^\varepsilon}(x),
\end{equation}
is due to a rotation of constant angular velocity, that is,
\begin{equation}\label{intro-ansatz}
\omega(t,x)=\omega_{0,\varepsilon}(e^{-i\Omega t}x),
\end{equation}
for some $\Omega\in\R$. Moreover, we will consider that the center of masses of $D_0^\varepsilon$ is $l\in\R^\star$ and thus we take
$$
D_m^\varepsilon=e^{\frac{2\pi i m}{N}}D_0^\varepsilon.
$$
Note the following relation with the $N$ point vortex system. In the case that $D_0^\varepsilon=\varepsilon\D+l$, one finds for $\varepsilon\rightarrow 0$ the point vortex distribution \eqref{intro-points}.

Furthermore, inserting the ansatz \eqref{intro-ansatz} in the Euler equations, one finds the following equivalent equation
\begin{equation}\label{intro-eq}
\left(v_0(x)-\Omega x^\perp\right)\cdot n_{\partial D_0^\varepsilon}(x)=0, \quad x\in\partial D_0^\varepsilon,
\end{equation}
where $n_{\partial D_0^\varepsilon}$ stands for a unit normal vector to $\partial D_0^\varepsilon$. Moreover, we can parametrize $\partial D_0^\varepsilon-l$ by a conformal map $\Phi:\T\rightarrow \partial D_0^\varepsilon-l$ as
$$
\Phi(w)=\varepsilon(w+\varepsilon f(w)), \quad f(w)=\sum_{n\geq 1}a_n w^{-n}, \quad a_n\in\R, w\in\T,
$$
where $f$ does not depend on $\varepsilon$. Thus, after some computations \eqref{intro-eq} amounts to
$$
F(\Omega,\varepsilon,f)(w)=\textnormal{Re}\left[\left\{\overline{v_0(\Phi(w)+l)}+i\Omega(\overline{\Phi(w)}+l)\right\}w(w+\varepsilon f'(w))\right]=0, \quad w\in\T.
$$
The velocity field $v_0$ associated to \eqref{intro-omega0} takes the following expression
\begin{align*}
v_0(\Phi(w)+l)=&\frac{iw}{2\pi \varepsilon} +\frac{1}{4\pi^2}\overline{\int_{\T}\frac{\overline{w}-\overline{\xi}+\varepsilon(f(\overline{w})-f(\overline{\xi}))}{w-\xi+\varepsilon(f(w)-f(\xi))}f'(\xi)d\xi}\\
\nonumber&+\frac{1}{4\pi^2}\overline{\int_{\T}\frac{(w-\xi)(f(\overline{w})-f(\overline{\xi}))-(\overline{w}-\overline{\xi})(f(w)-f(\xi))}{(w-\xi)(w-\xi+\varepsilon(f(w)-f(\xi)))}d\xi}\\
\nonumber&+\sum_{m=1}^{N-1}\frac{1}{4\pi^2}\overline{\int_{\T}\frac{\overline{\xi}+\varepsilon f(\overline{\xi})}{\varepsilon e^{\frac{i2\pi m}{N}}({\xi}+\varepsilon f({\xi}))-\varepsilon ({w}+\varepsilon f({w}))+(e^{\frac{i2\pi m}{N}}-1)l}(1+\varepsilon f'(\xi))d\xi},
\end{align*}
for any $w\in\T$.

Then, the problem reduces to find non trivial roots of $F$ and thus get non trivial domains $D_m^\varepsilon$ such as \eqref{intro-omega0} evolves as \eqref{intro-ansatz}. Furthermore, we can prove that $F(\Omega_0,0,0)(w)=0$ for any $w\in\T$, where $\Omega_0$ is the angular velocity associated to the $N$ point vortices \eqref{intro-points}. 

The idea now is the use of the infinite dimensional Implicit Function theorem to $F$ in order to find solutions around the trivial one $(\Omega_0,0,0)$. Then, one needs that the linearized operator around such trivial solution is an isomorphism. If one formally compute such operator one gets
$$
\partial_f F(\Omega_0,0,0)h(w)=\frac{1}{2\pi}\textnormal{Im}[h'(w)].
$$
In order to have that $\partial_f F(\Omega_0,0,0)$ is an isomorphism, let us define the following function spaces
\begin{align}
X_{1+\alpha}=&\left\{f\in\mathscr{C}^{1+\alpha}(\T), \quad f(w)=\sum_{n\geq 1}a_n w^{-n}, \, a_n\in\R\right\},\label{intro-X}\\
Y_\alpha=&\left\{f\in\mathscr{C}^{\alpha}(\T), \quad f(e^{i\theta})=\sum_{n\geq 2}a_n \sin(n\theta),\, a_n\in\R\right\},\label{intro-Y}
\end{align}
for $\alpha\in(0,1)$. Thus, it is clear that $\partial_f F(\Omega_0,0,0):X_{1+\alpha}\rightarrow Y_\alpha$ is an isomorphism, for any $\alpha\in(0,1)$.

However, the nonlinear functional $F$ is not a priori well--defined in such function spaces. Indeed, one can prove that it belongs to $\mathscr{C}^{\alpha}(\T)$ and takes the form
$$
F(\Omega,\varepsilon,f)(e^{i\theta})=\sum_{n\geq 1}f_n\sin(n\theta),
$$
with $\theta\in[0,2\pi)$ and $f_n\in\R$. Hence, in order to have that $F\in Y_\alpha$ one needs an extra condition: $f_1=0$. To achieve that, we assume that $\Omega$ depends appropriately on $(\varepsilon,f)$. Indeed, by assuming
\begin{equation}\label{intro-Omega-exp}
\Omega(\varepsilon,f)=\frac{i\int_{\T}J(\varepsilon,f)(w)(w-\overline{w})(1+\varepsilon f'(w))dw}{\int_{\T}(1+\varepsilon f'(w))(w-\overline{w})(l+\varepsilon \overline{w}+\varepsilon^2 f(\overline{w}))dw}.
\end{equation}
one has now that
$$
F(\Omega(\varepsilon,f),\varepsilon,f)(e^{i\theta})=\sum_{n\geq 2}f_n\sin(n\theta),
$$
and then it is worthy to work with $\tilde{F}$ defined as
$$
\tilde{F}(\varepsilon,f)=F(\Omega(\varepsilon,f),\varepsilon,f),
$$
instead of $F$. Moreover, one can check that $\Omega(0,f)=\Omega_0$, for any $f\in X_{1+\alpha}$, and then
$$
\partial_f \tilde{F}(0,0)=(\partial_f F)(\Omega(0,0),0,0)+(\partial_\Omega F)(\Omega(0,0),0,0)(\partial_f \Omega)(0,0)=\partial_f F(\Omega_0,0,0),
$$
which is an isomorphism in those spaces as it was mentioned before. Thus the function spaces \eqref{intro-X}--\eqref{intro-Y} seem to be well chosen in this case.
Similar techniques but with more involved computations work for $\textnormal{(SQG)}_{\beta}$, for $\beta\in(0,1)$.

This work is organized as follows. In Section \ref{Sec-points} we describe the $N$ vortex model studying the evolution of the $N$ polygon. Section \ref{Sec-Euler} refers to the desingularization of the $N$ polygon for Euler equations whereas Section \ref{Sec-gsqg} stands for the one in the $\textnormal{(SQG)}_{\beta}$ equations.

\section{Point vortex model}\label{Sec-points}
This section aims to study the dynamics of $N$ point vortices located at the vertex of a regular polygon with $N$ sides. We shall notice that their evolution is given by a rotation of constant angular velocity. That occurs not only for the Newtonian interaction but also for more general interactions.

Then, consider initially a regular polygon with $N$ sides and $N$ point vortices located at the vertex of the polygon. Furthermore, assume that the polygon has its center at the origin and that there is a vertex in the horizontal axis, meaning $z_0(0)=l\in\R$, then the others vertex are described by
$$
z_m(0)=e^{\frac{i2\pi m}{N}}z_0(0),
$$
for any $m=0,\dots,N-1$. The evolution of such points with Newtonian interaction is given by the classical $N$--vortex problem, that is,
\begin{align}
z_m'(t)=&\frac{1}{2\pi}\sum_{m\neq k=0}^{N-1}\frac{(z_m(t)-z_k(t))^\perp}{|z_m(t)-z_k(t)|^2},\label{evol-points}\\
z_m(0)=&e^{\frac{i2\pi m}{N}}z_0(0),\nonumber
\end{align}
for any $m=0,\dots,N-1$. 

On the other hand, assuming that the interaction between the points is a general function $G:\R\rightarrow\R$ smooth off zero, one has
\begin{align}
z_m'(t)=&\sum_{m\neq k=0}^{N-1}\nabla^\perp_{z_m}G(|z_m(t)-z_k(t)|),\label{evol-points-general}\\
z_m(0)=&e^{\frac{i2\pi m}{N}}z_0(0),\nonumber
\end{align}
for any $m=0,\dots,N-1$.  See \cite{Newton} for more details.

 In the following proposition, we show that the evolution of such points is given through a rotation of constant angular velocity. For a better understanding, we first give the result for the Newtonian interaction: 
\begin{pro}\label{Prop-points}
Let $z_m(0)=e^{\frac{i2\pi m}{N}}z_0(0)$ and $z_0(0)=l\in\R$, for any $m=0, \dots, N-1$. Then, $z_m(t)=e^{i\Omega t}z_m(0)$, where 
\begin{equation}\label{omega}
\Omega=\frac{1}{2\pi l^2}\sum_{k=1}^{N-1}\frac{1}{1-e^{\frac{i2\pi k}{N}}},
\end{equation} 
for any $m=0,\dots,N-1$.
\end{pro}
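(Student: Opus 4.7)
The approach I would take is direct verification: substitute the rotating ansatz $z_m(t) = e^{i\Omega t}z_m(0)$ into the ODE system \eqref{evol-points} and check that, thanks to the rotational symmetry of the regular polygon, a single real constant $\Omega$ independent of $m$ makes the equation hold. The only nontrivial point is showing that the resulting sum is indeed real and independent of $m$.

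First I would rewrite \eqref{evol-points} in complex notation. Since for $z\in\C$ one has $z^\perp=iz$ and $\frac{z}{|z|^2}=\frac{1}{\overline{z}}$, the system reads
$$
z_m'(t)=\frac{i}{2\pi}\sum_{k\neq m}\frac{1}{\overline{z_m(t)-z_k(t)}}.
$$
Plugging in the ansatz $z_m(t)=e^{i\Omega t}e^{\frac{i2\pi m}{N}}l$, the left--hand side equals $i\Omega\,e^{i\Omega t}e^{\frac{i2\pi m}{N}}l$, while the differences satisfy $\overline{z_m(t)-z_k(t)}=e^{-i\Omega t}l\bigl(e^{-\frac{i2\pi m}{N}}-e^{-\frac{i2\pi k}{N}}\bigr).$ Notice that $\Omega$ must be real in order for the ansatz to be consistent with the conjugation; we will check this a posteriori.

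Next I would factor the denominator as $e^{-\frac{i2\pi m}{N}}\bigl(1-e^{-\frac{i2\pi(k-m)}{N}}\bigr)$ and reindex $j=k-m\pmod N$. This yields
$$
\frac{i}{2\pi}\sum_{k\neq m}\frac{1}{\overline{z_m(t)-z_k(t)}}=\frac{i\,e^{i\Omega t}e^{\frac{i2\pi m}{N}}}{2\pi l}\sum_{j=1}^{N-1}\frac{1}{1-e^{-\frac{i2\pi j}{N}}},
$$
so the $m$--dependence decouples exactly and matches the left--hand side provided
$$
\Omega=\frac{1}{2\pi l^{2}}\sum_{j=1}^{N-1}\frac{1}{1-e^{-\frac{i2\pi j}{N}}}.
$$
Finally I would perform the change of index $j\mapsto N-j$ in the sum, which sends $e^{-\frac{i2\pi j}{N}}$ to $e^{\frac{i2\pi j}{N}}$ and recovers the form \eqref{omega}. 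The same change of index also proves that $\Omega$ equals its complex conjugate, hence $\Omega\in\R$; alternatively one can pair the terms $j$ and $N-j$ and use $\mathrm{Re}\bigl(\tfrac{1}{1-e^{i\varphi}}\bigr)=\tfrac{1}{2}$ to obtain the equivalent closed form $\Omega=\frac{N-1}{4\pi l^{2}}$.

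There is no real obstacle here: the proof is a clean symmetry computation, and the only detail requiring attention is the reindexing of the sum to make manifest both the $m$--independence and the reality of $\Omega$. The same strategy will extend verbatim to the general interaction \eqref{evol-points-general}, which is presumably the next step, using that $\nabla^\perp G(|\cdot|)$ is equivariant under rotations.
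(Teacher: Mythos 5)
Your proposal is correct and follows essentially the same route as the paper: insert the rotating ansatz into \eqref{evol-points}, factor out $e^{\frac{i2\pi m}{N}}$, reindex the sum to exhibit $m$--independence, and verify reality of $\Omega$ by the index flip $j\mapsto N-j$. The closed form $\Omega=\frac{N-1}{4\pi l^{2}}$ via $\mathrm{Re}\bigl(\tfrac{1}{1-e^{i\varphi}}\bigr)=\tfrac{1}{2}$ is a correct bonus not stated in the paper.
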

\begin{rem}
It is well--known in the literature that in the case of a vortex pair with strength $1$ and separated by a distance $d$, one has that it rotates at angular velocity $\Omega=\frac{1}{\pi d^2}$. Note that it agrees with \eqref{omega} by taking $N=2$ and $d=2l$.
\end{rem}
\begin{proof}
From \eqref{evol-points} we easily get that
$$
\frac{d}{dt}\sum_{m=0}^{N-1}z_m(t)=0,
$$
and then
$$
\sum_{m=0}^{N-1}z_m(t)=\sum_{m=0}^{N-1}z_m(0).
$$
Note that from the particular initial distribution of points, one has that
$$
\sum_{m=0}^{N-1}z_m(0)=0,
$$
implying 
$$
\sum_{m=0}^{N-1}z_m(t)=\sum_{m=0}^{N-1}z_m(0)=0.
$$
Insert now the ansatz $z_m(t)=e^{i\Omega t}z_m(0)$ in \eqref{evol-points}, thus we achieve
\begin{equation}\label{eq-points1}
i\Omega e^{i\Omega t}z_m(0)=\frac{i}{2\pi}\sum_{m\neq k=0}^{N-1}\frac{e^{i\Omega t}(z_m(0)-z_k(0))}{|z_m(0)-z_k(0)|^2},
\end{equation}
which agrees with
$$
\Omega z_m(0)=\frac{1}{2\pi}\sum_{m\neq k=0}^{N-1}\frac{z_m(0)-z_k(0)}{|z_m(0)-z_k(0)|^2}.
$$
From the definition of $z_m(0)$ in \eqref{omega}, one has that 
$$
\Omega e^{\frac{i2\pi m}{N}}z_0(0)=\frac{1}{2\pi}\sum_{m\neq k=0}^{N-1}\frac{e^{\frac{i2\pi m}{N}}z_0(0)-e^{\frac{i2\pi k}{N}}z_0(0)}{|e^{\frac{i2\pi m}{N}}z_0(0)-e^{\frac{i2\pi k}{N}}z_0(0)|^2}.
$$
Multiplying everything by $e^{-\frac{i2\pi m}{N}}$ and making the change of variables $k-m\mapsto k$ one finds
$$
\Omega=\frac{1}{2\pi|z_0(0)|^2}\sum_{0\neq k=-m}^{N-1-m}\frac{1-e^{\frac{i2\pi k}{N}}}{|1-e^{\frac{i2\pi k}{N}}|^2}.
$$
Note that the above expression equals to
\begin{equation}\label{omega1}
\Omega=\frac{1}{2\pi|z_0(0)|^2}\sum_{k=1}^{N-1}\frac{1-e^{\frac{i2\pi k}{N}}}{|1-e^{\frac{i2\pi k}{N}}|^2}=\frac{1}{2\pi|z_0(0)|^2}\sum_{k=1}^{N-1}\frac{1}{1-e^{\frac{i2\pi k}{N}}},
\end{equation}
that does not depend on $m$. Hence, there exists a constant $\Omega$ verifying \eqref{eq-points1} for any $m=1,\dots,N-1$. Moreover, from \eqref{omega1} we can check that $\Omega\in\R$, and using that $|z_0(0)|=l$ we achieve \eqref{omega}.
\end{proof}

The previous result can easily be generalized to the system \eqref{evol-points-general}.

\begin{pro}\label{Prop-points-general}
Let $G:\R\rightarrow \R$ be smooth off zero, $z_m(0)=e^{\frac{i2\pi m}{N}}z_0(0)$ and $z_0(0)=l\in\R$, for any $m=0, \dots, N-1$. Then, $z_m(t)=e^{i\Omega t}z_m(0)$, where 
\begin{equation}\label{omega-general}
\Omega=\frac{1}{l}\sum_{k=1}^{N-1}\frac{1-e^{\frac{i2\pi k}{N}}}{|1-e^{\frac{i2\pi k}{N}}|}G'(l|1-e^{\frac{i2\pi k}{N}}|),
\end{equation} 
for any $m=0,\dots,N-1$.
\end{pro}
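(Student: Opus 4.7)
The plan is to follow the same steps as in the proof of Proposition \ref{Prop-points}, the only new ingredient being the chain rule for the radial interaction $G(|\cdot|)$. First I would compute $\nabla^\perp G(|x|) = G'(|x|)\frac{x^\perp}{|x|}$ for $x\neq 0$, so that the evolution equation \eqref{evol-points-general} rewrites, using the complex identification $x^\perp \simeq i x$, as
$$
z_m'(t)=i\sum_{m\neq k=0}^{N-1}\frac{z_m(t)-z_k(t)}{|z_m(t)-z_k(t)|}\, G'(|z_m(t)-z_k(t)|).
$$

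Next, I would insert the rotating ansatz $z_m(t)=e^{i\Omega t}z_m(0)$. Since $|z_m(t)-z_k(t)|=|z_m(0)-z_k(0)|$ is preserved by rigid rotation, the factors $e^{i\Omega t}$ cancel from both sides, leaving the stationary condition
$$
\Omega\, z_m(0)=\sum_{m\neq k=0}^{N-1}\frac{z_m(0)-z_k(0)}{|z_m(0)-z_k(0)|}\, G'(|z_m(0)-z_k(0)|).
$$
Substituting $z_m(0)=e^{\frac{i2\pi m}{N}}l$, multiplying both sides by $e^{-\frac{i2\pi m}{N}}$, and performing the change of index $k-m\mapsto k$ (which is harmless by $N$-periodicity), the right-hand side becomes independent of $m$ and equals
$$
\sum_{k=1}^{N-1}\frac{1-e^{\frac{i2\pi k}{N}}}{|1-e^{\frac{i2\pi k}{N}}|}\, G'\bigl(l|1-e^{\frac{i2\pi k}{N}}|\bigr),
$$
which upon dividing by $l$ gives the claimed formula \eqref{omega-general} and shows that a single $\Omega$ works simultaneously for all $m=0,\dots,N-1$.

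Finally I would verify that $\Omega\in\mathbb{R}$, so that the ansatz is genuinely a rotation with real angular velocity. This follows from pairing the terms $k$ and $N-k$ in the sum: since $1-e^{\frac{i2\pi(N-k)}{N}}=\overline{1-e^{\frac{i2\pi k}{N}}}$ and $|1-e^{\frac{i2\pi(N-k)}{N}}|=|1-e^{\frac{i2\pi k}{N}}|$, the contributions of $k$ and $N-k$ are complex conjugates and hence their sum is real; the term $k=N/2$ (when $N$ is even) is itself real. I expect essentially no obstacle beyond bookkeeping: the proof is a direct transcription of the Newtonian case, the only conceptual difference being that the modulus $|z_m(0)-z_k(0)|=l|1-e^{\frac{i2\pi k}{N}}|$ now enters as the argument of $G'$ rather than being absorbed into an inverse square.
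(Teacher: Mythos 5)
Your proposal is correct and follows essentially the same route as the paper: insert the rotating ansatz, cancel the common factor $e^{i\Omega t}$, substitute the polygon positions, multiply by $e^{-\frac{i2\pi m}{N}}$ and shift the summation index to see that $\Omega$ is independent of $m$. Your explicit computation of $\nabla^\perp G(|x|)$ and the conjugate-pairing argument showing $\Omega\in\R$ are welcome details that the paper leaves implicit.
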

\begin{proof}
This proof is very similar to the previous one. Indeed, assume that we have a rotation evolution for \eqref{evol-points-general}, that is, 
$$
z_m(t)=e^{i\Omega t}z_m(0).
$$
Hence, one finds that $\Omega$ must satisfy

\begin{equation}\label{eq-points1-g}
i\Omega e^{i\Omega t}z_m(0)=i\sum_{m\neq k=0}^{N-1}\frac{e^{i\Omega t}(z_m(0)-z_k(0))}{|z_m(0)-z_k(0)|}G'(|z_m(0)-z_k(0)|),
\end{equation}
which can be reduced to
$$
\Omega z_m(0)=\sum_{m\neq k=0}^{N-1}\frac{z_m(0)-z_k(0)}{|z_m(0)-z_k(0)|}G'(|z_m(0)-z_k(0)|).
$$
Moreover, the location of $z_m(0)$ amounts to 
$$
\Omega e^{\frac{i2\pi m}{N}}z_0(0)=\frac{1}{2\pi}\sum_{m\neq k=0}^{N-1}\frac{e^{\frac{i2\pi m}{N}}z_0(0)-e^{\frac{i2\pi k}{N}}z_0(0)}{|e^{\frac{i2\pi m}{N}}z_0(0)-e^{\frac{i2\pi k}{N}}z_0(0)|}G'(|e^{\frac{i2\pi m}{N}}z_0(0)-e^{\frac{i2\pi k}{N}}z_0(0)|),
$$
and again multiplying everything by $e^{-\frac{i2\pi m}{N}}$ and making the change of variables $k-m\mapsto k$ it yields
$$
\Omega=\frac{1}{|z_0(0)|}\sum_{0\neq k=-m}^{N-1-m}\frac{1-e^{\frac{i2\pi k}{N}}}{|1-e^{\frac{i2\pi k}{N}}|}G'(|z_0(0)||1-e^{\frac{i2\pi k}{N}}|).
$$
Thus, it can be written as
\begin{equation}\label{omega1-g}
\Omega=\frac{1}{|z_0(0)|}\sum_{k=1}^{N-1}\frac{1-e^{\frac{i2\pi k}{N}}}{|1-e^{\frac{i2\pi k}{N}}|}G'(|z_0(0)||1-e^{\frac{i2\pi k}{N}}|),
\end{equation}
that does not depend on $m$ implying the existence of $\Omega$ verifying \eqref{eq-points1} for any $m=1,\dots,N-1$. Using now that $|z_0(0)|=l$ we achieve 
\begin{equation}\label{omega2-g}
\Omega=\frac{1}{l}\sum_{k=1}^{N-1}\frac{1-e^{\frac{i2\pi k}{N}}}{|1-e^{\frac{i2\pi k}{N}}|}G'(l|1-e^{\frac{i2\pi k}{N}}|).
\end{equation}
\end{proof}

\begin{rem}\label{rem-sqg}
Note that in the case of the generalized quasi--geostrophic equation, one has
$$
\Omega=\frac{\beta C_\beta}{l^{2+\beta}}\sum_{k=1}^{N-1}\frac{1-e^{\frac{i2\pi k}{N}}}{|1-e^{\frac{i2\pi k}{N}}|^{2+\beta}}.
$$
\end{rem}

\section{Vortex patch polygon for Euler equations}\label{Sec-Euler}

The purpose of this section is to study rotating vortex patches distributed in a regular polygon, which is motivated by Proposition \ref{Prop-points}. The idea is to desingularize the point vortices of Proposition \ref{Prop-points} by finding {\it small} vortex patches around them. First, we shall write down the equation that will characterize this type of solutions and later find the suitable function spaces where the problem is well--posed. Secondly, the implementation of the infinite dimensional Implicit Function spaces will lead to the existence of such {\it small} patches around the point vortices.

More specifically, our main task in this section is to find domains $D_m^\varepsilon$, for $m=0,\dots, N-1$, $N\geq 2$ and $\varepsilon>0$, such that the initial data
\begin{equation}\label{initial-data}
\omega_{0,\varepsilon}(x)=\frac{1}{\pi \varepsilon^2}\sum_{m=0}^{N-1}{\bf 1}_{ D^\varepsilon_m}(x),
\end{equation}
evolves as a rotation of constant angular velocity. That is, there exists $\Omega\in\R$ and $\varepsilon>0$ such that the evolution of \eqref{initial-data} is given by
\begin{equation}\label{initial-data-evol}
\omega(t,x)=\omega_{0,\varepsilon}(e^{-i\Omega t}x).
\end{equation}
Indeed, we will consider that $D^\varepsilon_m$ are located in the plane as the point vortices in Proposition \ref{Prop-points}. That is, take $D_0^\varepsilon$ with center of masses $l\in\R^\star$ and
\begin{equation}\label{Dm}
D_m^\varepsilon=e^{i\frac{2\pi m}{N}}D_0^\varepsilon,
\end{equation}
for $m\geq 1$. Note that assuming that $D_0^\varepsilon=\varepsilon\D+l$,  one finds for $\varepsilon\rightarrow 0$ in \eqref{initial-data} the point vortex distribution of Proposition \ref{Prop-points}:
\begin{equation}\label{initial-data0}
\omega_{0,0}(x)=\sum_{m=0}^{N-1}{\delta}_{e^{i\frac{2\pi m}{N}}l}(x).
\end{equation}
Now assume that the evolution of \eqref{initial-data} is given by \eqref{initial-data-evol}. In that case, the Euler equations agree with
\begin{equation}\label{eq-1}
\left(v_0(x)-\Omega x^\perp\right)\cdot n_{\partial D^\varepsilon_m}(x)=0, \quad x\in\partial  D_m^\varepsilon,
\end{equation}
for any $m=0,\dots N-1$. Here $n_{\partial  D_m^\varepsilon}$ stands for a unit normal vector to $\partial D_m^\varepsilon$.  Indeed using \eqref{Dm}, one finds that \eqref{eq-1} is equivalent to check such equation just for $x\in\partial D_0^\varepsilon$, that is,
\begin{equation}\label{eq-2}
\left(v_0(x)-\Omega x^\perp\right)\cdot n_{\partial D_0^\varepsilon}(x)=0, \quad x\in\partial D_0^\varepsilon.
\end{equation}

The next step is to assume some symmetry for $D_0^\varepsilon$ in order to simplify \eqref{eq-1}. Motivated by the works \cite{H-M} and \cite{G-KVS} where the authors desingularized different point vortex patterns, take that $D_0^\varepsilon-l$ is parametrized by a conformal map from $\T$ into $\partial D_0^\varepsilon-l$ in the sense
\begin{equation}\label{conformal-map}
\Phi(w)=\varepsilon \left(w+\varepsilon f(w)\right), \quad f(w)=\sum_{n\geq 1}a_n w^{-n},\quad a_n\in\R, w\in\T,
\end{equation}
where $f$ does not depend on $\varepsilon$. 

Using such parametrization, straightforward computations imply that \eqref{eq-2} agrees with
\begin{equation}\label{eq-3}
\textnormal{Re}\left[\left\{\overline{v_0(\Phi(w)+l)}+i\Omega(\overline{\Phi(w)}+l)\right\}w\Phi'(w)\right]=0, \quad w\in\T.
\end{equation}
In the next lemma we shall give an useful expression for $v_0(\Phi(w)+l)$:
\begin{lem}\label{Lem-velocity}
The velocity field $v_0$ associated to the initial data \eqref{initial-data}, assuming \eqref{conformal-map}, agrees with
\begin{align}\label{velocity-exp}
\nonumber&v_0(\Phi(w)+l)=\frac{iw}{2\pi \varepsilon} +\frac{1}{4\pi^2}\overline{\int_{\T}\frac{\overline{w}-\overline{\xi}+\varepsilon(f(\overline{w})-f(\overline{\xi}))}{w-\xi+\varepsilon(f(w)-f(\xi))}f'(\xi)d\xi}\\
\nonumber&+\frac{1}{4\pi^2}\overline{\int_{\T}\frac{(w-\xi)(f(\overline{w})-f(\overline{\xi}))-(\overline{w}-\overline{\xi})(f(w)-f(\xi))}{(w-\xi)(w-\xi+\varepsilon(f(w)-f(\xi)))}d\xi}\\
\nonumber&+\sum_{m=1}^{N-1}\frac{1}{4\pi^2}\overline{\int_{\T}\frac{\overline{\xi}+\varepsilon f(\overline{\xi})}{\varepsilon e^{\frac{i2\pi m}{N}}({\xi}+\varepsilon f({\xi}))-\varepsilon ({w}+\varepsilon f({w}))+(e^{\frac{i2\pi m}{N}}-1)l}(1+\varepsilon f'(\xi))d\xi}\\
=:&\frac{iw}{2\pi \varepsilon} +\overline{J(\varepsilon,f)(w)},
\end{align}
for any $w\in\T$.
\end{lem}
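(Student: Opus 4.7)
The plan is to rewrite $v_0$ through the complex Biot--Savart formula and then reduce the area integrals over each $D_m^\varepsilon$ to contour integrals via Stokes' theorem, so that $\Phi$ can be used as a parametrization. Identifying $\R^2\cong\C$, one has
$$\overline{v_0(z)}=-\frac{i}{2\pi^2\varepsilon^2}\sum_{m=0}^{N-1}\int_{D_m^\varepsilon}\frac{dA(y)}{z-y},$$
and the key tool is the globally valid identity
$$\int_D\frac{dA(y)}{z-y}=\frac{1}{2i}\oint_{\partial D}\frac{\bar y-\bar z}{z-y}\,dy,\qquad z\in\C.$$
The point is that $F(y)=(\bar y-\bar z)/(z-y)$ has modulus $1$ everywhere, hence $F\in L^\infty$, and its distributional derivative reads $\partial_{\bar y}F=1/(z-y)$ with no Dirac contribution at $y=z$; in particular the formula holds for $z\in\partial D$, which is exactly our situation since $z=\Phi(w)+l\in\partial D_0^\varepsilon$.

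For the self-interaction ($m=0$) I parametrize $\partial D_0^\varepsilon$ by $y=\Phi(\xi)+l$, $\xi\in\T$, and split the resulting integrand as its value at $f=0$ plus a correction. The $f=0$ contribution reduces, using $\bar\xi=1/\xi$ on $\T$, to the residue computation
$$-\frac{1}{4\pi^2\varepsilon}\oint_\T\frac{d\xi}{w\xi}=-\frac{i\bar w}{2\pi\varepsilon},$$
which is the boundary velocity of the unperturbed round disk $\varepsilon\D+l$ and, after conjugation, yields the leading term $iw/(2\pi\varepsilon)$. The correction factors as $(A-A_0)+\varepsilon A f'(\xi)$, where $A$ and $A_0$ denote the perturbed and unperturbed ratios $(\bar\xi-\bar w+\varepsilon(\cdots))/(w-\xi+\varepsilon(\cdots))$. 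A direct algebraic manipulation rewrites $A-A_0$ in terms of the numerator $(w-\xi)(f(\bar w)-f(\bar\xi))-(\bar w-\bar\xi)(f(w)-f(\xi))$ over the product $(w-\xi)\bigl((w-\xi)+\varepsilon(f(w)-f(\xi))\bigr)$, producing, after conjugation, exactly the two self-interaction integrals of the statement.

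For the cross-interactions ($m\neq 0$) I parametrize $\partial D_m^\varepsilon$ by $y=e^{i2\pi m/N}(\Phi(\xi)+l)$. After substitution the integrand coincides with the stated $C_m$-integrand up to an extra term of the form
$$\frac{-\varepsilon e^{i2\pi m/N}(\bar w+\varepsilon f(\bar w))+(1-e^{i2\pi m/N})l}{D_m(\xi)}\bigl(1+\varepsilon f'(\xi)\bigr),$$
whose numerator is independent of $\xi$ and whose denominator $D_m(\xi):=\varepsilon e^{i2\pi m/N}(\xi+\varepsilon f(\xi))-\varepsilon(w+\varepsilon f(w))+(e^{i2\pi m/N}-1)l$ is exactly the one in the statement. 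Since $D_m'(\xi)=\varepsilon e^{i2\pi m/N}(1+\varepsilon f'(\xi))$, this extra contribution is a constant multiple of $\oint_\T D_m'(\xi)/D_m(\xi)\,d\xi=2\pi i\,\mathrm{ind}_0(D_m)$. For $\varepsilon$ sufficiently small, $D_m$ stays uniformly close to the nonzero constant $(e^{i2\pi m/N}-1)l$, so its winding number around the origin vanishes and the extra term is zero. Collecting the self- and cross-contributions and taking the complex conjugate delivers the claimed formula $v_0(\Phi(w)+l)=iw/(2\pi\varepsilon)+\overline{J(\varepsilon,f)(w)}$. The only non-routine step is this winding-number cancellation for the cross terms; the remainder is careful but routine bookkeeping.
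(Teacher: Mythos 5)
Your proof is correct and follows essentially the same route as the paper: the complex Biot--Savart representation, reduction of the area integrals to contour integrals via Stokes' theorem, parametrization by the conformal map, and the splitting of the self-interaction integrand into the unperturbed disk term plus the two correction integrals. The only (cosmetic) difference is in the cross terms, where the paper first kills the $\xi$-independent part of the numerator by Cauchy's theorem on $\partial D_m^\varepsilon$ (the integrand being holomorphic there since $x\notin \overline{D_m^\varepsilon}$), while you carry it through the parametrization and dispose of it at the end via the equivalent winding-number argument for $D_m$.
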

\begin{proof}
Note that the velocity field associated to \eqref{initial-data} is given by
\begin{equation*}
v_0(x)=\sum_{m=0}^{N-1}\frac{i}{2\pi^2\varepsilon^2}\int_{D_m^\varepsilon}\frac{dA(y)}{\overline{x}-\overline{y}},
\end{equation*}
for $x\in \partial D_0^\varepsilon$. By using Stokes theorem, it agrees with
\begin{equation}\label{v0-eq1}
v_0(x)=\sum_{m=0}^{N-1}\frac{1}{4\pi^2\varepsilon^2}\overline{\int_{\partial D_m^\varepsilon}\frac{\overline{x}-\overline{\xi}}{x-\xi}d\xi},
\end{equation}
for $x\in \partial D_0^\varepsilon$. Note now that if $m\neq 0$, one automatically has the following

\begin{equation*}
\int_{\partial D_m^\varepsilon}\frac{d\xi}{x-\xi}=0,
\end{equation*}
since $x\in D_0^\varepsilon$. Hence \eqref{v0-eq1} can be written as
\begin{equation*}
v_0(x)=\frac{1}{4\pi^2\varepsilon^2}\overline{\int_{\partial D_0^\varepsilon}\frac{\overline{x}-\overline{\xi}}{x-\xi}d\xi}+\sum_{m=1}^{N-1}\frac{1}{4\pi^2\varepsilon^2}\overline{\int_{\partial D_m^\varepsilon}\frac{\overline{\xi}}{\xi-x}d\xi},
\end{equation*}

Thanks to \eqref{Dm}, it amounts to
\begin{equation*}
v_0(x)=\frac{1}{4\pi^2\varepsilon^2}\overline{\int_{\partial D_0^\varepsilon}\frac{\overline{x}-\overline{\xi}}{x-\xi}d\xi}+\sum_{m=1}^{N-1}\frac{1}{4\pi^2\varepsilon^2}\overline{\int_{\partial D_0^\varepsilon}\frac{\overline{\xi}}{e^{\frac{i2\pi m}{N}}\xi-x}d\xi},
\end{equation*}
for $x\in \partial D_0^\varepsilon$. Making now the change of variables $x\mapsto x'+l$, where $x'\in\Phi(\T)$, the previous expression is equivalent to
\begin{equation*}
v_0(x+l)=\frac{1}{4\pi^2\varepsilon^2}\overline{\int_{\partial \Phi(\T)}\frac{\overline{x}-\overline{\xi}}{x-\xi}d\xi}+\sum_{m=1}^{N-1}\frac{1}{4\pi^2\varepsilon^2}\overline{\int_{\partial \Phi(\T)}\frac{\overline{\xi}}{e^{\frac{i2\pi m}{N}}\xi-x+(e^{\frac{i2\pi m}{N}}-1)l}d\xi}.
\end{equation*}

Introducing the conformal map \eqref{conformal-map} we get
\begin{align*}
&v_0(\Phi(w)+l)=\frac{1}{4\pi^2\varepsilon}\overline{\int_{\T}\frac{\overline{w}-\overline{\xi}+\varepsilon(f(\overline{w})-f(\overline{\xi}))}{w-\xi+\varepsilon(f(w)-f(\xi))}(1+\varepsilon f'(\xi))d\xi}\\
&+\sum_{m=1}^{N-1}\frac{1}{4\pi^2}\overline{\int_{\T}\frac{\overline{\xi}+\varepsilon f(\overline{\xi})}{\varepsilon e^{\frac{i2\pi m}{N}}({\xi}+\varepsilon f({\xi}))-\varepsilon ({w}+\varepsilon f({w}))+(e^{\frac{i2\pi m}{N}}-1)l}(1+\varepsilon f'(\xi))d\xi}\\
&=:\overline{I_1^\varepsilon(w)+I_2^\varepsilon(w)},
\end{align*}
for any $w\in\T$.  By virtue of the Residue theorem and following the ideas of \cite[Page 709]{H-M}, we get that
\begin{align*}
I_1^\varepsilon(w)=&-\frac{i}{2\pi \varepsilon}\overline{w}+\frac{1}{4\pi^2}\int_{\T}\frac{\overline{w}-\overline{\xi}+\varepsilon(f(\overline{w})-f(\overline{\xi}))}{w-\xi+\varepsilon(f(w)-f(\xi))}f'(\xi)d\xi\\
&+\frac{1}{4\pi^2}\int_{\T}\frac{(w-\xi)(f(\overline{w})-f(\overline{\xi}))-(\overline{w}-\overline{\xi})(f(w)-f(\xi))}{(w-\xi)(w-\xi+\varepsilon(f(w)-f(\xi)))}d\xi.
\end{align*}
Finally, we arrive at
\begin{align*}
&v_0(\Phi(w)+l)=\frac{iw}{2\pi \varepsilon} +\frac{1}{4\pi^2}\overline{\int_{\T}\frac{\overline{w}-\overline{\xi}+\varepsilon(f(\overline{w})-f(\overline{\xi}))}{w-\xi+\varepsilon(f(w)-f(\xi))}f'(\xi)d\xi}\\
&+\frac{1}{4\pi^2}\overline{\int_{\T}\frac{(w-\xi)(f(\overline{w})-f(\overline{\xi}))-(\overline{w}-\overline{\xi})(f(w)-f(\xi))}{(w-\xi)(w-\xi+\varepsilon(f(w)-f(\xi)))}d\xi}\\
&+\sum_{m=1}^{N-1}\frac{1}{4\pi^2}\overline{\int_{\T}\frac{\overline{\xi}+\varepsilon f(\overline{\xi})}{\varepsilon e^{\frac{i2\pi m}{N}}({\xi}+\varepsilon f({\xi}))-\varepsilon ({w}+\varepsilon f({w}))+(e^{\frac{i2\pi m}{N}}-1)l}(1+\varepsilon f'(\xi))d\xi}\\
=:&\frac{iw}{2\pi \varepsilon} +\overline{J(\varepsilon,f)(w)},
\end{align*}
for any $w\in\T$.
\end{proof}

By virtue of the above Lemma \ref{Lem-velocity}, we have that \eqref{eq-3} is equivalent to
\begin{equation}\label{F}
F(\varepsilon, \Omega, f)(w):=\textnormal{Re}\left[\left\{I(\varepsilon,f)(w)+i\Omega\varepsilon(\overline{w}+\varepsilon f(\overline{w}))+i\Omega l\right\}w(1+\varepsilon f'(w))\right]=0,
\end{equation}
for any $w\in\T$. The function $I(\varepsilon,f)$ is connected to the velocity field and is given by
\begin{align}\label{I-exp}
&I(\varepsilon,f)(w)=-\frac{i\overline{w}}{2\pi\varepsilon}+J(\varepsilon,f)(w),
\end{align}
where $J(\varepsilon,f)$ is defined in \eqref{velocity-exp}, and has the following expression
\begin{align}\label{J-exp}
J(\varepsilon,f)(w)=&\frac{1}{4\pi^2}\int_{\T}\frac{\overline{w}-\overline{\xi}+\varepsilon(f(\overline{w})-f(\overline{\xi}))}{w-\xi+\varepsilon(f(w)-f(\xi))}f'(\xi)d\xi\\
\nonumber&+\frac{1}{4\pi^2}\int_{\T}\frac{(w-\xi)(f(\overline{w})-f(\overline{\xi}))-(\overline{w}-\overline{\xi})(f(w)-f(\xi))}{(w-\xi)(w-\xi+\varepsilon(f(w)-f(\xi)))}d\xi\\
\nonumber&+\frac{1}{4\pi^2}\sum_{m=1}^{N-1}\int_{\T}\frac{\overline{\xi}+\varepsilon f(\overline{\xi})}{\varepsilon e^{\frac{i2\pi m}{N}}({\xi}+\varepsilon f({\xi}))-\varepsilon ({w}+\varepsilon f({w}))+(e^{\frac{i2\pi m}{N}}-1)l}(1+\varepsilon f'(\xi))d\xi,
\end{align}
for $w\in\T$. By using $J(\varepsilon,f)$, one finds that \eqref{F} agrees with
\begin{equation}\label{F-2}
F(\varepsilon, \Omega, f)(w):=\textnormal{Re}\left[-\frac{i}{2\pi}f'(w)+\left\{J(\varepsilon,f)(w)+i\Omega\varepsilon(\overline{w}+\varepsilon f(\overline{w}))+i\Omega l\right\}w(1+\varepsilon f'(w))\right]=0,
\end{equation}
for any $w\in\T$.

In order to find solutions as \eqref{initial-data}--\eqref{initial-data-evol}, one has to study the roots of $F$. First, let us show in the next lemma some symmetry properties of $F(\varepsilon,\Omega,f)$:

\begin{lem}\label{Lem-F-sym}
If $(\Omega,\varepsilon)\in\R^2$ and $f\in \mathscr{C}^1$ satisfies 
\begin{equation}\label{f-exp}
f(w)=\sum_{n\geq 1}a_nw^{-n},\quad a_n\in\R,w\in\T,
\end{equation}
hence 
\begin{equation}\label{F-sym2}
F(\Omega,\varepsilon,f)(\overline{w})=-F(\Omega,\varepsilon,f)(w), \quad w\in\T.
\end{equation} 
That implies 
\begin{equation}\label{F-sym}
F(\Omega,\varepsilon,f)(e^{i\theta})=\sum_{n\geq 1}f_n \sin(\theta), \quad f_n\in\R, \theta\in[0,2\pi].
\end{equation}
\end{lem}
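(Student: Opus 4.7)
The plan is to exploit the reflection symmetry across the real axis that is encoded in the reality of the Fourier coefficients $a_n$. Since $|w|=1$ on $\T$ and $a_n\in\R$, the expansion \eqref{f-exp} gives the basic identities
$$
\overline{f(w)}=f(\overline{w}),\qquad \overline{f'(w)}=f'(\overline{w}),\qquad w\in\T.
$$
Writing $\phi(w):=w+\varepsilon f(w)$, this is equivalent to $\overline{\phi(w)}=\phi(\overline{w})$ and $\overline{\phi'(w)}=\phi'(\overline{w})$, expressing the fact that $D_0^\varepsilon-l$ is invariant under complex conjugation.

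To prove \eqref{F-sym2}, I would write $F(\Omega,\varepsilon,f)(w)=\mathrm{Re}[G(w)]$, where $G(w)$ is the complex quantity appearing inside the brackets in \eqref{F-2}, and establish the stronger pointwise statement
$$
\overline{G(\overline{w})}=-G(w),\qquad w\in\T,
$$
from which \eqref{F-sym2} follows upon taking real parts. Expanding $\overline{G(\overline{w})}$ using the identities of the previous paragraph, together with $\Omega,l\in\R$, the terms $-\tfrac{i}{2\pi}f'$, $i\Omega\varepsilon(\overline{w}+\varepsilon f(\overline{w}))$ and $i\Omega l$ produce precisely their sign-flipped counterparts in $-G(w)$. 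Thus the entire claim reduces to proving the single identity
$$
\overline{J(\varepsilon,f)(\overline{w})}=-J(\varepsilon,f)(w).
$$

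I would verify this identity for each of the three summands in \eqref{J-exp} by the same scheme: rewrite the integrand in terms of $\phi$ and its conjugate (via $\overline{\phi(w)}=\phi(\overline{w})$), take the complex conjugate of the integral, and then perform the change of variable $\xi\mapsto\overline{\xi}$ on $\T$. The key formula is $\overline{d\xi}=-\overline{\xi}^{\,2}d\xi$, obtained by differentiating $\xi\overline{\xi}=1$, which yields the substitution rule $\int_{\T}F(\xi)d\xi=\int_{\T}F(\overline{\eta})\eta^{-2}d\eta$. After this substitution, the factors $\overline{\xi}^{\,2}$ and $\eta^{-2}$ cancel and an overall minus sign appears, giving back $-J_i(w)$ for each piece $J_i$. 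For the third summand (the polygonal self-interaction), the exponentials $e^{i2\pi m/N}$ become $e^{-i2\pi m/N}$ after conjugation, and this is absorbed by reindexing the sum via $m\mapsto N-m$, which preserves the index set $\{1,\dots,N-1\}$. The main obstacle is bookkeeping orientations and signs in this change of variable; once the formula $\overline{d\xi}=-\overline{\xi}^{\,2}d\xi$ is adopted, the computation is mechanical.

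Finally, to pass from \eqref{F-sym2} to \eqref{F-sym}, I would use the real-valuedness of $F$ to write the real Fourier expansion $F(e^{i\theta})=\tfrac{a_0}{2}+\sum_{n\geq1}\bigl(a_n\cos(n\theta)+b_n\sin(n\theta)\bigr)$ with real coefficients. The identity $F(e^{-i\theta})=-F(e^{i\theta})$ translates into $a_n=-a_n$ for all $n\geq 0$, hence $a_n=0$ and only the sine contribution $f_n:=b_n$ survives, yielding \eqref{F-sym}.
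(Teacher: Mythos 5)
Your proposal is correct and follows essentially the same route as the paper: the same conjugation identities $\overline{f(w)}=f(\overline{w})$, $\overline{f'(w)}=f'(\overline{w})$, the same reduction to the antisymmetry of $J(\varepsilon,f)$ under $w\mapsto\overline{w}$ via the substitution $\xi\mapsto\overline{\xi}$ on $\T$ (your rule $\overline{d\xi}=-\overline{\xi}^{\,2}d\xi$ is exactly the paper's identity $\overline{\int_{\T}f(\xi)d\xi}=-\int_{\T}\overline{f(\overline{\xi})}\,d\xi$), and the same reindexing $m\mapsto N-m$ for the polygonal sum. Your final step deducing the pure sine expansion from the odd symmetry is the standard argument the paper leaves implicit.
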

\begin{proof}
Note that if $f$ satisfies \eqref{f-exp}, hence $f(\overline{w})=\overline{f(w)}$ and $f'(\overline{w})=\overline{f(w)}$. Moreover, notice that the first part of \eqref{F-2} satisfies \eqref{F-sym2}:
\begin{align*}
\textnormal{Re}\left[-\frac{i}{2\pi}f'(\overline{w})\right]=-\textnormal{Re}\left[\overline{-\frac{i}{2\pi}f'(w)}\right]=-\textnormal{Re}\left[{-\frac{i}{2\pi}f'(w)}\right].
\end{align*}
The last part of \eqref{F-2} trivially satisfies \eqref{F-sym2}, that is,
\begin{align*}
\textnormal{Re}\left[\left\{i\Omega\varepsilon(\overline{w}+\varepsilon f(\overline{w}))+i\Omega l\right\}\overline{w}(1+\varepsilon f'(\overline{w}))\right]=&-\textnormal{Re}\left[\overline{\left\{i\Omega\varepsilon(\overline{w}+\varepsilon f(\overline{w}))+i\Omega l\right\}{w}(1+\varepsilon f'({w}))}\right]\\
=&-\textnormal{Re}\left[{\left\{i\Omega\varepsilon(\overline{w}+\varepsilon f(\overline{w}))+i\Omega l\right\}{w}(1+\varepsilon f'({w}))}\right].
\end{align*}
Hence, in order to check \eqref{F-sym2} it remains to check that
\begin{equation}\label{J-sym}
J(\varepsilon,f)(\overline{w})=-\overline{J(\varepsilon,f)(w)}.
\end{equation}
Recall the following property of the complex integrals over $\T$:
\begin{equation}\label{property-sym-int}
\overline{\int_{\T}f(\xi)d\xi}=-\int_{\T}\overline{f(\overline{\xi})}d\xi,
\end{equation}
for a complex function $f$. Then,
\begin{align*}
-\overline{J(\varepsilon,f)(w)}=&\frac{1}{4\pi^2}\int_{\T}\frac{{w}-\overline{\xi}+\varepsilon(f({w})-f(\overline{\xi}))}{\overline{w}-\xi+\varepsilon(f(\overline{w})-f(\xi))}f'(\xi)d\xi\\
\nonumber&+\frac{1}{4\pi^2}\int_{\T}\frac{(\overline{w}-\xi)(f({w})-f(\overline{\xi}))-({w}-\overline{\xi})(f(\overline{w})-f(\xi))}{(\overline{w}-\xi)(\overline{w}-\xi+\varepsilon(f(\overline{w})-f(\xi)))}d\xi\\
\nonumber&+\sum_{m=1}^{N-1}\int_{\T}\frac{\overline{\xi}+\varepsilon f(\overline{\xi})}{\varepsilon e^{-\frac{i2\pi m}{N}}({\xi}+\varepsilon f({\xi}))-\varepsilon ({\overline{w}}+\varepsilon f({\overline{w}}))+(e^{-\frac{i2\pi m}{N}}-1)l}(1+\varepsilon f'(\xi))d\xi.
\end{align*}
Note that we can do a change of variables in the sum $m\mapsto -m$ getting 
\begin{align*}
-\overline{J(\varepsilon,f)(w)}=&\frac{1}{4\pi^2}\int_{\T}\frac{{w}-\overline{\xi}+\varepsilon(f({w})-f(\overline{\xi}))}{\overline{w}-\xi+\varepsilon(f(\overline{w})-f(\xi))}f'(\xi)d\xi\\
\nonumber&+\frac{1}{4\pi^2}\int_{\T}\frac{(\overline{w}-\xi)(f({w})-f(\overline{\xi}))-({w}-\overline{\xi})(f(\overline{w})-f(\xi))}{(\overline{w}-\xi)(\overline{w}-\xi+\varepsilon(f(\overline{w})-f(\xi)))}d\xi\\
\nonumber&+\sum_{m=1}^{N-1}\int_{\T}\frac{\overline{\xi}+\varepsilon f(\overline{\xi})}{\varepsilon e^{\frac{i2\pi m}{N}}({\xi}+\varepsilon f({\xi}))-\varepsilon ({\overline{w}}+\varepsilon f({\overline{w}}))+(e^{\frac{i2\pi m}{N}}-1)l}(1+\varepsilon f'(\xi))d\xi.
\end{align*}
Then, finally we find
\begin{align*}
-\overline{J(\varepsilon,f)(w)}=&\frac{1}{4\pi^2}\int_{\T}\frac{{w}-\overline{\xi}+\varepsilon(f({w})-f(\overline{\xi}))}{\overline{w}-\xi+\varepsilon(f(\overline{w})-f(\xi))}f'(\xi)d\xi\\
\nonumber&+\frac{1}{4\pi^2}\int_{\T}\frac{(\overline{w}-\xi)(f({w})-f(\overline{\xi}))-({w}-\overline{\xi})(f(\overline{w})-f(\xi))}{(\overline{w}-\xi)(\overline{w}-\xi+\varepsilon(f(\overline{w})-f(\xi)))}d\xi\\
\nonumber&+\sum_{m=1}^{N-1}\int_{\T}\frac{\overline{\xi}+\varepsilon f(\overline{\xi})}{\varepsilon e^{\frac{i2\pi m}{N}}({\xi}+\varepsilon f({\xi}))-\varepsilon ({\overline{w}}+\varepsilon f({\overline{w}}))+(e^{\frac{i2\pi m}{N}}-1)l}(1+\varepsilon f'(\xi))d\xi\\
=&J(\varepsilon,f)(\overline{w}),
\end{align*}
getting \eqref{J-sym}. That concludes the proof.
\end{proof}

The main idea to find nontrivial roots to $\eqref{F-2}$ is the use of the infinite dimensional Implicit Function theorem around $(\Omega_0,0,0)$ where $\Omega_0$ is the angular velocity of the point vortex configuration in Proposition \ref{Prop-points}, that is, 
\begin{equation}\label{Omega0}
\Omega_0:=\frac{1}{2\pi l^2}\sum_{k=1}^{N-1}\frac{1}{1-e^{\frac{i 2\pi m}{N}}}.
\end{equation}
Hence, first we shall check that $(\Omega_0,0,0)$ is a root of $\eqref{F-2}$, which  is proved in the following proposition.

\begin{pro}\label{Prop-trivial}
The following equation is satisfied $F(\Omega_0,0,0)(w)=0$, for any $w\in\T$, where $\Omega_0$ is defined in \eqref{Omega0}.
\end{pro}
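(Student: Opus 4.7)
The plan is a direct verification: I substitute $\varepsilon=0$ and $f=0$ into the defining formula \eqref{F-2} for $F$ and check that the resulting expression vanishes identically on $\T$ when $\Omega=\Omega_0$.

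First, I specialize \eqref{F-2} to $(\Omega_0,0,0)$. The term $-\frac{i}{2\pi}f'(w)$ vanishes because $f=0$, and the term $i\Omega_0\varepsilon(\overline{w}+\varepsilon f(\overline{w}))$ vanishes because $\varepsilon=0$. The factor $(1+\varepsilon f'(w))$ reduces to $1$. Hence the identity to prove collapses to
$$F(\Omega_0,0,0)(w)=\textnormal{Re}\bigl[(J(0,0)(w)+i\Omega_0 l)\,w\bigr]=0, \quad w\in\T.$$

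Next I compute $J(0,0)(w)$ from \eqref{J-exp}. The first two integrals in the definition of $J(\varepsilon,f)$ are linear (resp.\ affine) in $f$ and $f'$, so they vanish at $f=0$. Only the self-induction sum survives, and at $\varepsilon=0$ it becomes
$$J(0,0)(w)=\frac{1}{4\pi^2}\sum_{m=1}^{N-1}\frac{1}{(e^{\frac{i2\pi m}{N}}-1)l}\int_\T \overline{\xi}\,d\xi.$$
The remaining integral is evaluated by writing $\xi=e^{i\theta}$, giving $\int_\T\overline{\xi}\,d\xi=2\pi i$. Therefore
$$J(0,0)(w)=\frac{i}{2\pi l}\sum_{m=1}^{N-1}\frac{1}{e^{\frac{i2\pi m}{N}}-1}=-\frac{i}{2\pi l}\sum_{m=1}^{N-1}\frac{1}{1-e^{\frac{i2\pi m}{N}}}.$$

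The last step is to recognize the sum: by the definition \eqref{Omega0} of $\Omega_0$, one has $\sum_{m=1}^{N-1}\frac{1}{1-e^{i2\pi m/N}}=2\pi l^2\Omega_0$, so that $J(0,0)(w)=-il\Omega_0$. Plugging this back yields
$$F(\Omega_0,0,0)(w)=\textnormal{Re}\bigl[(-il\Omega_0+i\Omega_0 l)\,w\bigr]=\textnormal{Re}[0]=0,$$
as required. There is essentially no obstacle in this proposition; it is the consistency check that the point vortex angular velocity $\Omega_0$ obtained in Proposition \ref{Prop-points} is precisely the value that makes $(\Omega_0,0,0)$ a trivial root of the nonlinear functional $F$, thereby setting the stage for the Implicit Function theorem to be applied in a neighborhood of this trivial solution.
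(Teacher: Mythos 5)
Your proof is correct and follows essentially the same route as the paper: specialize \eqref{F-2} at $(\Omega_0,0,0)$, observe that only the interaction sum survives in $J(0,0)$, evaluate $\int_\T\overline{\xi}\,d\xi=2\pi i$, and identify the resulting constant with $-il\Omega_0$ via \eqref{Omega0}. No issues.
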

\begin{proof}
From the expression of $F$ in \eqref{F-2} one finds 
\begin{align*}
F(\Omega,0,0)(w)=\textnormal{Re}\left[w\left\{J(0,0)(w)+i\Omega l\right\}\right].
\end{align*}
Moreover, from \eqref{J-exp}, we get
\begin{align*}
J(0,0)(w)=\frac{1}{4\pi^2}\sum_{m=1}^{N-1}\int_{\T}\frac{\overline{\xi}d\xi}{(e^{\frac{i 2\pi m}{N}}-1)l}=-\frac{i}{2\pi l}\sum_{m=1}^{N-1}\frac{1}{1-e^{\frac{i 2\pi m}{N}}}.
\end{align*}
Hence, if $\Omega=\Omega_0$ we trivially obtain that $F(\Omega_0,0,0)(w)=0$ for any $w\in\T$.
\end{proof}

Now we  need to fix the Banach spaces when using the Implicit Function theorem. For $\alpha\in(0,1)$, we define
\begin{align}
X_{1+\alpha}=&\left\{f\in\mathscr{C}^{1+\alpha}(\T), \quad f(w)=\sum_{n\geq 1}a_n w^{-n}, \, a_n\in\R\right\},\label{X}\\
Y_\alpha=&\left\{f\in\mathscr{C}^{\alpha}(\T), \quad f(e^{i\theta})=\sum_{n\geq 2}a_n \sin(n\theta),\, a_n\in\R\right\}.\label{Y}
\end{align}
Let us explain the choice of the previous function spaces. First, note that in Lemma \ref{Lem-F-sym}, we found that if $f\in X_{1+\alpha}$ then
$$
F(\Omega,\varepsilon,f)(e^{i\theta})=\sum_{n\geq 1}f_n \sin(n\theta), \quad \theta\in[0,2\pi].
$$
However, we have the constraint $f_1=0$ in $Y_\alpha$. Such assumption appears when computing the linearized operator of $F$ around $(\Omega_0,0,0)$ because one finds that it is an isomorphism from $X_{1+\alpha}$ to $Y_\alpha$. Moreover, since this is necessary for the Implicit Function theorem, our task now is to fix $\Omega$ to have that $f_1=0$ and thus $F(\Omega,\varepsilon,f)\in Y_\alpha$ is well--defined.

Define $B_{X}(0,\sigma)$ the unit ball of $X$ center in $0$ and radius $\sigma$. The next result reads as follows.

\begin{pro}\label{Prop-omega}
For any $\sigma<1$, $\alpha\in(0,1)$, and some $\varepsilon_0>0$, define the function $\Omega:(-\varepsilon_0,\varepsilon_0)\times B_{X_{1+\alpha}}(0,\sigma)\rightarrow \R$, given by
\begin{equation}\label{Omega-exp}
\Omega(\varepsilon,f):=\frac{i\int_{\T}J(\varepsilon,f)(w)(w-\overline{w})(1+\varepsilon f'(w))dw}{\int_{\T}(1+\varepsilon f'(w))(w-\overline{w})(l+\varepsilon \overline{w}+\varepsilon^2 f(\overline{w}))dw}.
\end{equation}
Hence, it fulfills
\begin{itemize}
\item $\Omega$ is well--defined.
\item $\Omega(0,f)=\Omega_0$, where $\Omega_0$ is defined in \eqref{Omega0}.
\item We get that
\begin{equation}\label{F-prop}
\int_0^{2\pi}F(\Omega(\varepsilon,f),\varepsilon,f)(e^{i\theta})\sin(\theta)=0,
\end{equation}
for any $(\varepsilon,f)\in  (-\varepsilon_0,\varepsilon_0)\times B_{X_{1+\alpha}}(0,\sigma)$ That implies that $f_1$ in \eqref{F-sym} vanishes.
\end{itemize}
\end{pro}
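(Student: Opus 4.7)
The plan is to verify the three properties in turn, leaning throughout on the conjugation rule $\overline{\int_\T g(\xi)\,d\xi}=-\int_\T \overline{g(\overline{\xi})}\,d\xi$ from \eqref{property-sym-int} and the symmetry $\overline{J(\varepsilon,f)(\overline{w})}=-J(\varepsilon,f)(w)$ from \eqref{J-sym}.

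For well--definedness and realness, I would denote the numerator of \eqref{Omega-exp} (without the leading $i$) by $A(\varepsilon,f)$ and the denominator by $B(\varepsilon,f)$. At $\varepsilon=0$ the denominator collapses to $l\int_\T(w-\overline{w})\,dw=-2\pi i l\neq 0$; standard continuity estimates on the Cauchy--type integrands in $J$ (the factor $w-\xi+\varepsilon(f(w)-f(\xi))$ is comparable to $w-\xi$ uniformly in $(\varepsilon,f)\in(-\varepsilon_0,\varepsilon_0)\times B_{X_{1+\alpha}}(0,\sigma)$ for small $\varepsilon_0$) then give continuity of $B$, hence $B\neq 0$ on a neighborhood. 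For realness, the conjugation rule combined with $\overline{f(\overline{w})}=f(w)$, $\overline{f'(\overline{w})}=f'(w)$ and $\overline{J(\overline{w})}=-J(w)$ yields $\overline{A}=A$ and $\overline{B}=-B$, i.e.\ $A\in\R$ and $B\in i\R$. Therefore $\Omega(\varepsilon,f)=iA/B\in\R$.

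For $\Omega(0,f)=\Omega_0$ the strategy is to prove the pointwise identity $J(0,f)(w)=-il\Omega_0$ independently of $f$. Of the three integrals in \eqref{J-exp} at $\varepsilon=0$, the third reduces to $-il\Omega_0$ as in the proof of Proposition \ref{Prop-trivial}. For the first two I would exploit the $\T$--identity $\overline{w}-\overline{\xi}=-(w-\xi)/(w\xi)$: the first integral collapses to $-\tfrac{1}{4\pi^2 w}\int_\T \xi^{-1}f'(\xi)\,d\xi=0$ since $\xi^{-1}f'(\xi)$ has no residue at $0$ when $f\in X_{1+\alpha}$; the second, after the same identity and a partial--fraction decomposition $\frac{1}{\xi(w-\xi)}=\frac{1}{w\xi}+\frac{1}{w(w-\xi)}$, splits into two Laurent--coefficient computations that cancel exactly (in the same spirit as \cite[p.~709]{H-M}). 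Substituting $J(0,f)=-il\Omega_0$ into \eqref{Omega-exp} gives $\Omega(0,f)=\Omega_0$.

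For the orthogonality property \eqref{F-prop}, Lemma \ref{Lem-F-sym} gives $\int_0^{2\pi}F(\Omega,\varepsilon,f)\sin\theta\,d\theta=\pi f_1$, so the claim is equivalent to choosing $\Omega$ so that $f_1=0$. Writing $F=\textnormal{Re}[H]$ with $H$ the complex bracket in \eqref{F-2}, and using $\sin\theta=(w-\overline{w})/(2i)$, $d\theta=dw/(iw)$, I would rewrite this integral as $-\tfrac12\textnormal{Re}\bigl[\int_\T H(w)\tfrac{w-\overline{w}}{w}\,dw\bigr]$. The contribution of $-\tfrac{i}{2\pi}f'(w)$ drops out because $f'(w)(1-w^{-2})$ has no $w^{-1}$ Laurent coefficient for $f\in X_{1+\alpha}$. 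What remains is the affine equation $\textnormal{Re}[A]-\Omega\,\textnormal{Im}[B]=0$, and the facts $A\in\R$, $B\in i\R$ turn its unique solution into $\Omega=iA/B$, which is exactly $\Omega(\varepsilon,f)$.

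The delicate step is the pointwise cancellation in the second part: the first two Cauchy integrals appearing in $J(0,f)$ are nontrivial for general $f\in X_{1+\alpha}$, yet they must vanish identically to recover the clean point--vortex angular velocity $\Omega_0$. Once this algebraic identity is in place, the remaining work reduces to carefully tracking the symmetries of integrals on $\T$.
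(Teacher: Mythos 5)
Your proposal is correct and follows essentially the same route as the paper: realness via the conjugation symmetries of Lemma \ref{Lem-F-sym}, the identity $J(0,f)\equiv -il\Omega_0$ obtained by showing the two Cauchy-type integrals in \eqref{J-exp} vanish at $\varepsilon=0$ (the paper's \eqref{int-1}--\eqref{int-2}, which it attributes to the Residue theorem and you verify by explicit Laurent/partial-fraction computations), and the observation that \eqref{Omega-exp} is exactly the solution of the affine-in-$\Omega$ equation $\int_0^{2\pi}F\sin\theta\,d\theta=0$. You supply more detail than the paper at the two places it is terse (the vanishing of \eqref{int-1}--\eqref{int-2} and the derivation of \eqref{Omega-exp} from \eqref{F-prop}), but the argument is the same.
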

\begin{rem}
We need to include the condition $\varepsilon\in(-\varepsilon_0,\varepsilon_0)$ in order to have that the patches are well--separated. Moreover, we need to assume $\delta<1$ to get that $\Phi$ is bilipschitz and thus everything is well--defined.
\end{rem}
\begin{proof}
We can easily check that $\Omega\in\R$ by using the techniques developed in Lemma \ref{Lem-F-sym}. Let us check the second point. Note that
\begin{align*}
\Omega(0,f)=\frac{i\int_{\T}J(0,f)(w)(w-\overline{w})dw}{l\int_{\T} (w-\overline{w})dw}.
\end{align*}
On the one hand, we obtain
$$
\int_{\T}(w-\overline{w})dw=-2\pi i,
$$
by the Residue theorem, which implies
\begin{align}\label{Omega-2}
\Omega(0,f)=-\frac{\int_{\T}J(0,f)(w)(w-\overline{w})dw}{2\pi l}.
\end{align}
On the other hand, by using \eqref{J-exp} one achieves
\begin{align*}
J(0,f)(w)=&\frac{1}{4\pi^2}\int_{\T} \frac{\overline{w}-\overline{\xi}}{w-\xi}f'(\xi)d\xi\\
&+\frac{1}{4\pi^2}\int_{\T}\frac{(w-\xi)(f(\overline{w})-f(\overline{\xi}))-(\overline{w}-\overline{\xi})(f(w)-f(\xi))}{(w-\xi)^2}d\xi\\
&+\frac{1}{4\pi^2}\sum_{m=1}^{N-1}\frac{1}{l(e^{\frac{i2\pi}{N}}-1)}\int_{\T}\overline{\xi}d\xi.
\end{align*}
Moreover, note that
\begin{align}
\label{int-1}&\int_{\T}\frac{\overline{w}-\overline{\xi}}{w-\xi}f'(\xi)d\xi=0,\\
\label{int-2}&\int_{\T}\frac{(w-\xi)(f(\overline{w})-f(\overline{(\xi}))-(\overline{w}-\overline{\xi})(f(w)-f(\xi))}{(w-\xi)^2}d\xi=2i\int_{\T}\frac{\textnormal{Im}\left[\overline{(f(w)-f(\xi))(w-\xi)}\right]}{(w-\xi)^2}d\xi=0,
\end{align}
using again the Residue theorem. That implies
\begin{align*}
J(0,f)(w)=&\frac{1}{4\pi^2}\sum_{m=1}^{N-1}\frac{2\pi i}{l(e^{\frac{i2\pi}{N}}-1)}=-\frac{i}{2\pi l}\sum_{m=1}^{N-1}\frac{1}{1-e^{\frac{i2\pi}{N}}}.
\end{align*}
Inserting this into \eqref{Omega-2} we get $\Omega(0,f)=\Omega_0$. Finally, note that \eqref{Omega-exp} comes from imposing \eqref{F-prop} to \eqref{F-2}.
\end{proof}

Once we have defined $\Omega$ in terms of $(\varepsilon,f)$, we can define $\tilde{F}(\varepsilon,f)$ as
\begin{equation}\label{F-tilde}
\tilde{F}(\varepsilon,f)(w)=F(\Omega(\varepsilon,f),\varepsilon,f)(w),
\end{equation}
and work with $\tilde{F}$ instead of $F$. Note that from Proposition \ref{Prop-omega} we get that
$$
\tilde{F}(\varepsilon,f)(e^{i\theta})=\sum_{n\geq 2}f_n\sin(n\theta),
$$
with $\theta\in[0,2\pi]$ and $f_n\in\R$. Then, in order to check that $\tilde{F}:(-\varepsilon_0,\varepsilon_0)\times X_{1+\alpha}\rightarrow Y_{\alpha}$ is well--defined it remains to prove the regularity of $Y_{\alpha}$. For this reason we must work with singular integrals of the type
\begin{equation}\label{operator-T}
\mathcal{T}(f)(w)=\int_{\T} K(w,\xi)f(\xi)\, \, d\xi,\quad w\in\T,
\end{equation}
where $K:\T\times \T\rightarrow\C$ being smooth off the diagonal. The next result focuses on the smoothness of the last operator, whose proof can be found in \cite{Hassa-Hmi}. See also \cite{Helms, Kress, LiebLoss}.

\begin{lem}\label{Lem-pottheory}
Let $0\leq \alpha<1$ and consider $K:\T\times\T\rightarrow \C$ with the following properties. There exists $C_0>0$ such that
\begin{itemize}
\item[(i)] $K$ is measurable on $\T\times\T\setminus\{(w,w), w\in\T\}$ and 
$$
|K(w,\xi)|\leq \frac{C_0}{|w-\xi|^\alpha}, \quad \forall w\neq \xi\in\T.
$$
\item[(ii)] For each $\xi\in\T$, $w\mapsto K(w,\xi)$ is differentiable in $\T\setminus\{\xi\}$ and 
$$
|\partial_w K(w,\xi)|\leq \frac{C_0}{|w-\xi|^{1+\alpha}}, \quad \forall w\neq \xi\in\T.
$$
\end{itemize}
Then,
\begin{enumerate}
\item The operator $\mathcal{T}$ defined by \eqref{operator-T} is continuous from $L^{\infty}(\T)$ to $C^{1-\alpha}(\T)$. More precisely, there exists a constant $C_{\alpha}$ depending only on $\alpha$ such that
$$
\|\mathcal{T}(f)\|_{1-\alpha}\leq C_{\alpha}C_0\|f\|_{L^{\infty}}.
$$
\item For $\alpha=0$, the operator $\mathcal{T}$ is continuous from $L^{\infty}(\T)$ to $C^{\beta}(\T)$, for any $0<\beta<1$. That is, there exists a constant $C_{\beta}$ depending only on $\beta$ such that
$$
\|\mathcal{T}(f)\|_{\beta}\leq C_{\beta}C_0\|f\|_{L^{\infty}}.
$$
\end{enumerate}
\end{lem}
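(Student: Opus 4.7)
The bound for the $L^\infty$ norm is immediate: since $\alpha<1$, the integral $\sup_{w\in\T}\int_\T |w-\xi|^{-\alpha}|d\xi|$ is finite and independent of $w$, so hypothesis (i) gives $\|\mathcal{T}(f)\|_{L^\infty}\lesssim C_0\|f\|_{L^\infty}$. The content is the Hölder seminorm estimate, which I would handle by the standard near/far decomposition around the singularity of $K$. Fix $w_1,w_2\in\T$, set $\delta:=|w_1-w_2|$, and write
$$
\mathcal{T}(f)(w_1)-\mathcal{T}(f)(w_2)=\int_\T [K(w_1,\xi)-K(w_2,\xi)]\,f(\xi)\,d\xi.
$$
Decompose the domain as $A_{\text{near}}=\{\xi\in\T:|w_1-\xi|\leq 2\delta\}$ and $A_{\text{far}}=\T\setminus A_{\text{near}}$.

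On $A_{\text{near}}$ I would simply use the triangle inequality together with hypothesis (i), noting that $|w_2-\xi|\leq 3\delta$ throughout this zone; both resulting integrals reduce to $\int_0^{C\delta} r^{-\alpha}dr\lesssim \delta^{1-\alpha}$. On $A_{\text{far}}$ I would invoke the mean value theorem along the shorter arc from $w_1$ to $w_2$. For any point $w$ on this arc, $|w-\xi|\geq |w_1-\xi|-\delta\geq |w_1-\xi|/2$ because $|w_1-\xi|\geq 2\delta$, so hypothesis (ii) gives
$$
|K(w_1,\xi)-K(w_2,\xi)|\leq \delta\sup_{w}|\partial_w K(w,\xi)|\leq C_0 2^{1+\alpha}\frac{\delta}{|w_1-\xi|^{1+\alpha}}.
$$
Integrating over $A_{\text{far}}$ yields $\lesssim \delta\int_{2\delta}^{\pi}r^{-1-\alpha}dr\lesssim \delta^{1-\alpha}$. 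Combining the two zones and multiplying by $\|f\|_{L^\infty}$ gives the desired Hölder estimate with exponent $1-\alpha$, which together with the $L^\infty$ bound yields the claim in part (1).

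For part (2), when $\alpha=0$ the same decomposition works with minor adjustments. The near zone contribution is $\lesssim \delta$ because $K$ is bounded, and since $\delta\leq \text{diam}(\T)$ we have $\delta\lesssim \delta^\beta$ for any $\beta\in(0,1)$. The far zone integral becomes $\delta\int_{2\delta}^{\pi} r^{-1}dr\lesssim \delta|\log \delta|$, which is likewise controlled by $C_\beta\,\delta^\beta$ for any $\beta<1$. The only subtle point, which I expect to be the main technical concern, is the application of the mean value inequality on $\T$ rather than on a convex Euclidean set: one must parametrize by the shorter arc and interpret $\partial_w K$ as the tangential derivative along $\T$ (equivalently, as $\partial_\theta K(e^{i\theta},\xi)$ up to the factor $|w|=1$), and verify that the lower bound $|w-\xi|\geq |w_1-\xi|/2$ is preserved along this arc. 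Once that geometric point is fixed, the remainder of the argument is a routine application of the two hypotheses on $K$.
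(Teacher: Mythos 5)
Your proposal is correct, and it is essentially the argument the paper relies on: the paper does not prove this lemma but cites Hassainia--Hmidi \cite{Hassa-Hmi}, where exactly this near/far decomposition (hypothesis (i) on the zone $|w_1-\xi|\lesssim |w_1-w_2|$, hypothesis (ii) plus the mean value inequality on the complement) is carried out. The one geometric point you flag is genuine but harmless: the shorter arc from $w_1$ to $w_2$ has length at most $\tfrac{\pi}{2}|w_1-w_2|$, so for $w$ on that arc one gets $|w-\xi|\geq |w_1-\xi|-\tfrac{\pi}{2}\delta\geq c\,|w_1-\xi|$ on the far zone with a slightly smaller constant than $\tfrac12$ (or one simply takes the near-zone cutoff at $4\delta$), and the rest of your estimates go through unchanged.
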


By virtue of the previous lemma, we obtain that $J(\varepsilon,f):(-\varepsilon_0,\varepsilon_0)\times B_{X_{1+\alpha}}(0,\sigma)\rightarrow \mathscr{C}^{\alpha}(\T)$ is well--defined and $\mathscr{C}^1$. That implies the well--definition of $\tilde{F}$. We omit here the proof due to its similarity with the works \cite{H-M, G-KVS}.

\begin{pro}\label{Prop-welldef}
For any $\sigma<1$, $\alpha\in(0,1)$, and some $\varepsilon_0>0$, $\tilde{F}:(-\varepsilon_0,\varepsilon_0)\times B_{X_{1+\alpha}}(0,\sigma)\rightarrow Y_{\alpha}$, is well--defined and $\mathscr{C}^1$.
\end{pro}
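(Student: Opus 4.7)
The plan is to split the verification into three tasks: (a) showing that $\tilde{F}(\varepsilon,f)$ takes values in $\mathscr{C}^\alpha(\T)$, (b) showing that it has the correct Fourier structure to lie in the subspace $Y_\alpha$, and (c) establishing continuous differentiability of the map $(\varepsilon,f)\mapsto\tilde{F}(\varepsilon,f)$. Task (b) is essentially free from what is already proved: Lemma \ref{Lem-F-sym} guarantees that $F(\Omega,\varepsilon,f)(e^{i\theta})$ expands in pure sines $\sum_{n\geq 1}f_n\sin(n\theta)$, and the particular choice of $\Omega(\varepsilon,f)$ in \eqref{Omega-exp} enforces \eqref{F-prop}, which is exactly the condition $f_1=0$. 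So the true content of the proposition is regularity plus $\mathscr{C}^1$ dependence.

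For task (a), the algebraic prefactors in $\tilde{F}$ involving $w$, $\overline{w}$, $\varepsilon$, $l$, $f(\overline{w})$, $\Omega(\varepsilon,f)$ and $1+\varepsilon f'(w)$ are all of class $\mathscr{C}^\alpha$, so it suffices to prove that each of the three integrals in $J(\varepsilon,f)$, see \eqref{J-exp}, defines a $\mathscr{C}^\alpha$ function of $w$. My strategy is to show in each case that the kernel $K(w,\xi)$ is bounded on $\T\times\T$ and that $\partial_w K$ has a mild singularity of order $|w-\xi|^{-\beta}$ for some $\beta<1$, so that Lemma \ref{Lem-pottheory} applies (with $\alpha=0$, yielding $\mathscr{C}^\beta$ for any $\beta<1$, in particular $\mathscr{C}^\alpha$). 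For the first integral the bilipschitz bound $|w-\xi|\leq C\,|w-\xi+\varepsilon(f(w)-f(\xi))|$, valid because $\sigma<1$ and $|\varepsilon|$ is small, makes the quotient bounded. For the second integral, the numerator vanishes at order $|w-\xi|^2$ on the diagonal by a direct Taylor expansion, which compensates the $(w-\xi)$ factor in the denominator. The third sum is non-singular because $\varepsilon e^{\frac{i2\pi m}{N}}(\xi+\varepsilon f(\xi))-\varepsilon(w+\varepsilon f(w))+(e^{\frac{i2\pi m}{N}}-1)l$ is bounded away from zero for $|\varepsilon|<\varepsilon_0$, thanks to $l\neq 0$ and $m\neq 0$. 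Differentiation in $w$ is handled by the same bounds with one additional power in the denominator, still controlled by the $\mathscr{C}^{1+\alpha}$ norm of $f$.

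For task (c), I would compute the partial derivatives $\partial_\varepsilon J$ and $\partial_f J$ by differentiating under the integral sign. Each derivative produces new kernels of exactly the same algebraic form as the original ones, with the additional bounded factors $(h(w)-h(\xi))/(w-\xi+\varepsilon(f(w)-f(\xi)))$ (from $\partial_f$) or elementary analytic functions of $\varepsilon$ (from $\partial_\varepsilon$), so the same application of Lemma \ref{Lem-pottheory} yields continuity of the derivatives into $\mathscr{C}^\alpha(\T)$. Once $J$ is $\mathscr{C}^1$, the quotient formula \eqref{Omega-exp} shows $\Omega(\varepsilon,f)$ is $\mathscr{C}^1$ as a real-valued function, since its denominator equals $-2\pi i l+O(\varepsilon)$ and stays nonzero on the domain. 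Composing, $\tilde{F}(\varepsilon,f)=F(\Omega(\varepsilon,f),\varepsilon,f)$ is $\mathscr{C}^1$ from $(-\varepsilon_0,\varepsilon_0)\times B_{X_{1+\alpha}}(0,\sigma)$ into $Y_\alpha$.

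The main obstacle is the careful verification that the two potentially singular integrals in $J$ yield bounded and differentiable kernels after cancellation; everything else is a book-keeping exercise. Since these are precisely the type of estimates carried out in detail in \cite{H-M} and \cite{G-KVS}, I would invoke those computations rather than reproduce them, and the author's choice to omit the proof is therefore justified.
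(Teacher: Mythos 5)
Your proposal is correct and follows exactly the route the paper takes: the paper itself only sketches this step, invoking Lemma \ref{Lem-pottheory} to get that $J(\varepsilon,f)$ maps into $\mathscr{C}^{\alpha}(\T)$ and is $\mathscr{C}^1$, using Lemma \ref{Lem-F-sym} together with Proposition \ref{Prop-omega} for membership in $Y_\alpha$, and deferring the detailed kernel estimates to \cite{H-M, G-KVS}. Your outline fills in the same cancellations (conjugate/bilipschitz bound for the first kernel, second-order vanishing of the numerator in the second, non-degeneracy of the denominator in the third sum for small $\varepsilon$) that those references carry out, so it is a faithful, slightly more explicit version of the paper's argument.
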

%\begin{proof}
%{\color{red} proof}
%From Lemma \ref{Lem-F-sym} one has that 
%$$
%F(\Omega,\varepsilon,f)(e^{i\theta})=\sum_{n\geq 1}f_n \sin(n\theta), \quad f_n\in\R, \theta\in[0,2\pi].
%$$
%Moreover, by using Proposition \ref{Prop-omega}, one gets that $f_1=0$ and thus 
%$$
%F(\Omega,\varepsilon,f)(e^{i\theta})=\sum_{n\geq 2}f_n \sin(n\theta), \quad f_n\in\R, \theta\in[0,2\pi].
%$$
%Hence, it remains to check that $\tilde{F}$ is well--defined and $\mathscr{C}^1$ from $X_{1+\alpha}$ into $\mathscr{C}^{\alpha}$. Then, one must check the regularity for $F$ in \eqref{F-2} and $\Omega$ in \eqref{Omega-2}. For that goal, it is enough to check such regularity on $J(\varepsilon,f)$ in \eqref{J-exp}. 
%
%Using Lemma \ref{Lem-pottheory} we get that $J(\varepsilon,f)\in \mathscr{C}^{\alpha}(\T)$. 
%
%\end{proof}

Finally, in the next result we use the infinite dimensional Implicit Function theorem to $\tilde{F}$ in order to get nontrivial roots and thus to conclude the main result of this work.
\begin{theo}\label{Th-euler}
Consider $l\in\R^\star$ and $N\geq 2$. Then, there exists $\varepsilon_0>0$ with the following property. For all $\varepsilon\in(0,\varepsilon_0)$, there is a simply--connected bounded domain $D^\varepsilon$, with center of masses $l$, such that
$$
\omega_0(x)=\frac{1}{\pi \varepsilon^2}\sum_{m=0}^{N-1}{\bf 1}_{e^{\frac{i 2\pi m}{N}}D^\varepsilon} (x),
$$
defines a rotating solution of \eqref{Euler}, with some constant angular velocity $\Omega(\varepsilon)$. Moreover, $D^\varepsilon$ is at least $\mathscr{C}^1$.
\end{theo}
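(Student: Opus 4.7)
The strategy is to apply the infinite--dimensional Implicit Function theorem to the functional $\tilde{F}:(-\varepsilon_0,\varepsilon_0)\times B_{X_{1+\alpha}}(0,\sigma)\to Y_\alpha$ defined in \eqref{F-tilde}, at the point $(\varepsilon,f)=(0,0)$. Two of the three hypotheses are already available: Proposition \ref{Prop-welldef} establishes the $\mathscr{C}^1$--regularity of $\tilde{F}$ between the chosen Banach spaces, while Proposition \ref{Prop-trivial} combined with the identity $\Omega(0,0)=\Omega_0$ from Proposition \ref{Prop-omega} yields $\tilde{F}(0,0)=F(\Omega_0,0,0)=0$. Only the isomorphism property of the linearization $\partial_f\tilde{F}(0,0):X_{1+\alpha}\to Y_\alpha$ remains to be verified.

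By the chain rule,
\[
\partial_f\tilde{F}(0,0)h=\partial_f F(\Omega_0,0,0)h+\partial_\Omega F(\Omega_0,0,0)\,\partial_f\Omega(0,0)h,
\]
and since Proposition \ref{Prop-omega} shows that $\Omega(0,\cdot)\equiv\Omega_0$, the second summand vanishes. For the first, I would inspect \eqref{F-2} term by term. The identities \eqref{int-1}--\eqref{int-2}, now applied to the perturbation $h\in X_{1+\alpha}$, show that the $f$--derivative at $(0,0)$ of the three integrals making up $J(\varepsilon,f)$ vanishes: the first two integrals are linear in $f$ and become zero by \eqref{int-1}--\eqref{int-2} themselves, while the third is independent of $f$ at $\varepsilon=0$. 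Since the factor $w(1+\varepsilon f'(w))$ multiplying the bracket in \eqref{F-2} reduces to $w$ at $\varepsilon=0$ and the bracket itself equals $J(0,0)(w)+i\Omega_0 l=0$, the only surviving contribution comes from the explicit term $-\tfrac{i}{2\pi}f'(w)$, so
\[
\partial_f\tilde{F}(0,0)h(w)=\textnormal{Re}\left[-\tfrac{i}{2\pi}h'(w)\right]=\tfrac{1}{2\pi}\textnormal{Im}[h'(w)].
\]
Writing $h(w)=\sum_{n\geq 1}a_n w^{-n}$ and evaluating at $w=e^{i\theta}$ gives $\partial_f\tilde{F}(0,0)h(e^{i\theta})=\frac{1}{2\pi}\sum_{n\geq 1}n\,a_n\sin((n+1)\theta)$. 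This operator is diagonal in the Fourier basis, sending the $n$--th coefficient of $h$ to a nonzero multiple of the $(n+1)$--th sine--coefficient of an element of $Y_\alpha$; combined with the standard boundedness of $\partial_\theta:\mathscr{C}^{1+\alpha}(\T)\to\mathscr{C}^\alpha(\T)$ and the obvious boundedness of its inverse on the complementary Fourier subspace, this shows $\partial_f\tilde{F}(0,0)$ is an isomorphism $X_{1+\alpha}\to Y_\alpha$.

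The Implicit Function theorem then produces $\varepsilon_0>0$ and a $\mathscr{C}^1$ curve $\varepsilon\mapsto f^\varepsilon\in X_{1+\alpha}$ with $f^0=0$ and $\tilde{F}(\varepsilon,f^\varepsilon)=0$ for every $\varepsilon\in(-\varepsilon_0,\varepsilon_0)$. Define $D^\varepsilon$ as the simply connected bounded domain whose boundary is parametrized by $w\mapsto\varepsilon(w+\varepsilon f^\varepsilon(w))+l$; shrinking $\varepsilon_0$ if necessary the map remains bilipschitz, so $\partial D^\varepsilon$ is embedded and of class $\mathscr{C}^{1+\alpha}\subset\mathscr{C}^1$. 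Setting $\Omega(\varepsilon):=\Omega(\varepsilon,f^\varepsilon)$, equation \eqref{eq-2} holds, and by the $N$--fold symmetry \eqref{Dm} this lifts to \eqref{eq-1} for all $m=0,\dots,N-1$, so $\omega_0$ rotates rigidly at angular velocity $\Omega(\varepsilon)$. The centering condition on $l$ can be imposed by a translation in the position parameter, as the IFT curve depends continuously on $l$ and at $\varepsilon=0$ the domain is the disc of center $l$. The main technical difficulty is not the IFT step itself but rather the verification, already carried out in Proposition \ref{Prop-welldef}, that $\tilde{F}$ maps into $Y_\alpha$: this is precisely what forces the careful choice of spaces \eqref{X}--\eqref{Y} and the definition \eqref{Omega-exp} of $\Omega(\varepsilon,f)$ killing the $\sin\theta$--mode.
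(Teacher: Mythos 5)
Your proposal is correct and follows essentially the same route as the paper: reduce to the Implicit Function theorem for $\tilde{F}$ via Propositions \ref{Prop-welldef}, \ref{Prop-trivial} and \ref{Prop-omega}, kill the $\partial_\Omega$ term using $\Omega(0,\cdot)\equiv\Omega_0$, show $\partial_f J(0,0)=0$ from \eqref{int-1}--\eqref{int-2}, and identify $\partial_f\tilde{F}(0,0)h=\tfrac{1}{2\pi}\textnormal{Im}[h']$ (the paper's $-\tfrac{1}{2\pi}\textnormal{Re}[ih']$) as an isomorphism $X_{1+\alpha}\to Y_\alpha$. The extra Fourier-diagonal justification of the isomorphism and the closing remarks on bilipschitz regularity are consistent with, and slightly more explicit than, the paper's argument.
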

\begin{proof}
From Proposition \ref{Prop-welldef}, we have that $\tilde{F}:(-\varepsilon_0,\varepsilon_0)\times B_{X_{1+\alpha}}(0,\sigma)\rightarrow Y_{\alpha}$ is well--defined and $\mathscr{C}^1$. Moreover, from Propositions \ref{Prop-trivial} and \ref{Prop-omega} one has that $\tilde{F}(0,0)(w)=0$, for any $w\in\T$.

Our aim is to apply the infinite dimensional Implicit Function theorem. Hence, we compute the linearized operator of $\tilde{F}$ around $(0,0)$. Note first that $\partial_f \Omega(0,0)=0$ by using Proposition \ref{Prop-omega}, and then
$$
\partial_f \tilde{F}(\varepsilon,f)h(w)=\partial _f F(\Omega_0,0,0)h(w).
$$
First, let us compute $\partial_f J(0,0)$ by using \eqref{J-exp}:
\begin{align*}
\partial_f J(0,0)h(w)=\frac{1}{4\pi^2}\int_{\T}\frac{\overline{w}-\overline{\xi}}{w-\xi}h'(\xi)d\xi+\frac{1}{4\pi^2}\int_{\T}\frac{(w-\xi)(h(\overline{w})-h(\overline{(\xi}))-(\overline{w}-\overline{\xi})(h(w)-h(\xi))}{(w-\xi)^2}d\xi.
\end{align*}
Moreover, \eqref{int-1}--\eqref{int-2} amounts to
$$
\partial_f J(0,0)h=0.
$$
Then, using \eqref{F-2} we find
\begin{align*}
\partial_f \tilde{F}(0,0)h(w)=&-\frac{1}{2\pi}\textnormal{Re}\left[ih'(w)\right]\\
&+\lim_{\varepsilon\rightarrow 0}\textnormal{Re}\left[\varepsilon wh'(w)\left\{J(0,0)(w)+i\Omega_0\varepsilon \overline{w}+i\Omega_0 l\right\}\right]\\
&+\textnormal{Re}\left[w\left\{\partial_f J(0,0)h(w)\right\}\right]\\
=&-\frac{1}{2\pi}\textnormal{Re}\left[ih'(w)\right],
\end{align*}
which is an isomorphism from $X_{1+\alpha}$ into $Y_\alpha$. Using then the Implicit Function theorem, we conclude the proof.
\end{proof}

\section{Vortex patch polygon for generalized surface quasi--geostrophic equations}\label{Sec-gsqg}
This section aims to generalize the previous result on Theorem \ref{Th-euler} for the generalized surface quasi--geostrophic equations with $\beta\in(0,1)$. First, we shall arrive to the equation that generalize the rotating polygon.

Similarly to Section \ref{Sec-Euler}, we would like to find domains $D_m^\varepsilon$, for $m=0,\dots N-1$, $N\geq 2$ and $\varepsilon>0$, such that the initial data
\begin{equation}\label{initial-data-g}
\theta_{0,\varepsilon}(x)=\frac{1}{\pi \varepsilon^2}\sum_{m=0}^{N-1}{\bf 1}_{D_m^\varepsilon}(x),
\end{equation}
evolves as a rotation for \eqref{gSQG}. Then, consider that there exists $\Omega\in\R$ and $\varepsilon>0$ such that
$$
\theta(t,x)=\theta_{0,\varepsilon}(e^{-i\Omega t}x),
$$
and
\begin{equation}\label{Dm-g}
D_m^\varepsilon=e^{i\frac{2\pi m}{N}}D_0^\varepsilon,
\end{equation}
for some bounded simply--connected domain $D_0^\varepsilon$. Moreover, assume that the center of masses of $D_0^\varepsilon$ is $l\in\R^\star$ and parametrize $D_0^\varepsilon-l$ by a conformal map from $\T$ into $\partial D_0^\varepsilon-l$ of the type
\begin{equation}\label{conf-map-g}
\Phi(w)=\varepsilon\left(w+\varepsilon^{1+\beta} f(w)\right), \quad f(w)=\sum_{n\geq 1}a_n w^{-n}, \quad a_n\in\R, w\in\T.
\end{equation}
Note that the scaling for $\Phi$ in $\varepsilon$ is different from \eqref{conformal-map}, which is the one for the Euler equations. Indeed it depends on the singularity of the kernel of the velocity field.

Hence, the equation that characterizes this kind of solutions is given by
\begin{equation}\label{eq1-g}
\textnormal{Re}\left[\left\{\overline{v_0(\Phi(w)+l)}+i\Omega(\overline{\Phi(w)}+l)\right\}w\Phi'(w)\right]=0, \quad w\in\T.
\end{equation}
We omit here the details of the deduction of the previous equation by similarity to \eqref{eq-1}.

In the following we give a simplified expression for the velocity field in order to arrive at the desired formulation. There, we use Taylor formula in a suitable manner to work better with the singularity coming from $\varepsilon$. Hence, the velocity field reads as follows.

\begin{pro}\label{Prop-vel-g}
The velocity field $v_0$ associated to the initial data \eqref{initial-data-g} agrees with
\begin{align*}
v_0(\Phi(w)+l)=&\frac{C_\beta}{\pi \varepsilon^{1+\beta}}\int_{\T}\frac{d\xi}{|w-\xi|^\beta}+J(\varepsilon,f)(w),
\end{align*}
where
\begin{align}
&J(\varepsilon,f)(w)=-\frac{\beta C_\beta}{\pi}\int_{\T}\int_0^1\frac{\textnormal{Re}\left[(w-\xi)\overline{(f(w)-f(\xi))}\right]+t\varepsilon^{1+\beta}|f(w)-f(\xi)|^2}{|w-\xi+t\varepsilon^{1+\beta}(f(w)-f(\xi))|^{2+\beta}}dt d\xi\nonumber\\
&+\frac{C_\beta}{\pi}\int_{\T}\frac{f'(\xi)}{\left|w-\xi+\varepsilon^{1+\beta} (f(w)-f(\xi))\right|^\beta}d\xi-\frac{\beta C_\beta}{\pi}\sum_{m=1}^{N-1}e^{\frac{i2\pi m}{N}}\nonumber\\
&\times\int_{\T}\Bigg\{\int_0^1\frac{\textnormal{Re}\left[l(1-e^{\frac{i2\pi m}{N}})\overline{(w-e^{\frac{i2\pi m}{N}}\xi+\varepsilon^{1+\beta}(f(w)-e^{\frac{i2\pi m}{N}} f(\xi)))}\right]}{|l(1-e^{\frac{i2\pi m}{N}})+t(\Phi(w)-e^{\frac{i2\pi m}{N}}\Phi(\xi))|^{2+\beta}}\nonumber\\
&+\frac{t\varepsilon|w-e^{\frac{i2\pi m}{N}}\xi+\varepsilon^{1+\beta}(f(w)-e^{\frac{i2\pi m}{N}} f(\xi))|^2}{|l(1-e^{\frac{i2\pi m}{N}})+t(\Phi(w)-e^{\frac{i2\pi m}{N}}\Phi(\xi))|^{2+\beta}}\Bigg\}dt d\xi\nonumber\\
&+\frac{C_\beta \varepsilon^\beta}{\pi}\sum_{m=1}^{N-1}e^{\frac{i2\pi m}{N}}\int_{\T}\frac{f'(\xi)}{\left|\Phi(w)-e^{\frac{i2\pi m}{N}}\Phi(\xi)+l(1-e^{\frac{i2\pi m}{N}})\right|^\beta}d\xi\nonumber\\
=:& (J_1+J_2+J_3+J_4)(\varepsilon,f)(w), \label{J-g}
\end{align}
for any $w\in\T$.
\end{pro}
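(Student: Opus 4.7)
\emph{Proof plan.} The derivation follows the template of Lemma~\ref{Lem-velocity}, but with the fractional kernel $K_\beta(x) = \frac{C_\beta}{2\pi|x|^\beta}$ replacing the Cauchy kernel, and with the sharper scaling of the conformal map~\eqref{conf-map-g} dictated by the weaker singularity of $K_\beta$. The heart of the argument is to reduce the area integrals to contour integrals via Stokes' theorem and then to isolate the $\varepsilon\to 0$ singular part by an explicit Taylor remainder, rather than by the Residue theorem used in the Euler case.

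Starting from $v_0 = -\nabla^\perp(K_\beta \ast \theta_{0,\varepsilon})$ and using $\nabla_x^\perp K_\beta(x-y) = -\nabla_y^\perp K_\beta(x-y)$, the divergence theorem in complex form converts each area integral to a contour integral, giving
\begin{equation*}
v_0(x) \;=\; \frac{1}{\pi\varepsilon^2}\sum_{m=0}^{N-1}\int_{\partial D_m^\varepsilon} K_\beta(x-\xi)\, d\xi.
\end{equation*}
Evaluate at $x = \Phi(w)+l$, parametrize $\partial D_m^\varepsilon$ by $\xi = e^{i 2\pi m/N}(\Phi(\xi')+l)$ (with Jacobian $\varepsilon\, e^{i 2\pi m/N}(1+\varepsilon^{1+\beta}f'(\xi'))$), and split this Jacobian as $1 + \varepsilon^{1+\beta}f'(\xi')$. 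The $\varepsilon^{1+\beta}f'(\xi')$ piece retains the full kernel and, once the $\varepsilon$-powers are balanced against the prefactor $\varepsilon^{-2}$ and the denominator $|\cdot|^{-\beta}$, produces exactly $J_2$ for $m=0$ and $J_4$ for $m\ge 1$.

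For the piece with Jacobian $1$, I invoke the one-parameter Taylor identity
\begin{equation*}
\frac{1}{|a+b|^\beta} \;=\; \frac{1}{|a|^\beta} \;-\; \beta\int_0^1\frac{\textnormal{Re}(\bar a\, b) + t|b|^2}{|a+tb|^{2+\beta}}\, dt,
\end{equation*}
which comes from integrating $\partial_t|a+tb|^{-\beta}$. For $m=0$, take $a=w-\xi$ and $b = \varepsilon^{1+\beta}(f(w)-f(\xi))$: the constant term $|a|^{-\beta}$, combined with the overall $\varepsilon^{-(1+\beta)}$ prefactor, gives the singular leading contribution $\frac{C_\beta}{\pi\varepsilon^{1+\beta}}\int_\T\frac{d\xi}{|w-\xi|^\beta}$, while the Taylor remainder (after factoring $\varepsilon^{1+\beta}$ outside) is precisely $J_1$. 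For $m\ge 1$, take $a = l(1-e^{i 2\pi m/N})$ (which is bounded away from zero and independent of $\xi$) and $b = \Phi(w)-e^{i 2\pi m/N}\Phi(\xi) = O(\varepsilon)$; the $|a|^{-\beta}$ term is constant in $\xi$ and therefore vanishes upon integration because $\int_\T d\xi = 0$, leaving only the Taylor remainder, which yields $J_3$.

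The only delicate point is exactly this cancellation: without $\int_\T d\xi = 0$ the $m\ge 1$ blocks would contribute a spurious $\varepsilon^{-1}$ singularity; it is this vanishing that forces all cross-interaction contributions to be collected inside $J_3$ and $J_4$ and leaves the singular leading term untouched as a pure self-interaction. Everything else is routine bookkeeping of the $\varepsilon$-powers and of the complex change of variables, guaranteed to be well-defined provided $\varepsilon$ is small enough so that the patches $D_m^\varepsilon$ remain pairwise disjoint and $\Phi$ is a diffeomorphism.
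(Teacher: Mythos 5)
Your proposal is correct and follows essentially the same route as the paper: Stokes' theorem to pass to contour integrals, the splitting of the Jacobian $\Phi'=\varepsilon(1+\varepsilon^{1+\beta}f')$ to produce $J_2$ and $J_4$, and the identical Taylor identity with the same choices of $A$ and $B$ to extract the singular self-interaction term and the remainders $J_1$ and $J_3$. You also correctly identify the cancellation $\int_{\T}d\xi=0$ that removes the would-be $\varepsilon^{-1}$ term from the cross-interactions, a step the paper performs implicitly.
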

\begin{proof}
The velocity field associated to \eqref{initial-data-g} reads as
\begin{align*}
v_0(x)=\frac{C_\beta i}{\pi\varepsilon^2}\sum_{m=0}^{N-1}\int_{D_m^\varepsilon}\frac{x-y}{|x-y|^{2+\beta}}dA(y),
\end{align*}
for $x\in\partial D_0^\varepsilon.$ Using Stokes theorem, we can write it as follows
\begin{align*}
v_0(x)=\frac{C_\beta}{\pi\varepsilon^2}\sum_{m=0}^{N-1}\int_{\partial D_m^\varepsilon}\frac{dy}{|x-y|^\beta},
\end{align*}
for $x\in\partial D_0^\varepsilon.$ Then, taking $x=\Phi(w)+l$ and using \eqref{Dm-g}, it agrees with
\begin{align*}
v_0(\Phi(w)+l)=\frac{C_\beta}{\pi\varepsilon^2}\int_{\partial D_0^\varepsilon}\frac{dy}{|\Phi(w)+l-y|^\beta}+\frac{C_\beta}{\pi\varepsilon^2}\sum_{m=1}^{N-1}e^{\frac{i2\pi m}{N}}\int_{\partial D_0^\varepsilon}\frac{dy}{|\Phi(w)+l-e^{\frac{i2\pi m}{N}}y|},
\end{align*}
for $w\in\T$. Hence
\begin{align*}
v_0(\Phi(w)+l)=&\frac{C_\beta}{\pi\varepsilon^2}\int_{\T}\frac{\Phi'(\xi)}{|\Phi(w)-\Phi(\xi)|^\beta}d\xi\\
&+\frac{C_\beta}{\pi\varepsilon^2}\sum_{m=1}^{N-1}e^{\frac{i2\pi m}{N}}\int_{\T}\frac{\Phi'(\xi)}{|\Phi(w)-e^{\frac{i2\pi m}{N}}\Phi(\xi)+l(1-e^{\frac{i2\pi m}{N}})|^\beta}d\xi,
\end{align*}
for $w\in\T$. Now using the definition of $\Phi$, one gets
\begin{align*}
v_0(\Phi(w)+l)=&\frac{C_\beta}{\pi\varepsilon^{1+\beta}}\int_{\T}\frac{d\xi}{\left|(w-\xi)+\varepsilon^{1+\beta} (f(w)-f(\xi))\right|^\beta}\\
&+\frac{C_\beta}{\pi}\int_{\T}\frac{f'(\xi)}{\left|w-\xi+\varepsilon^{1+\beta} (f(w)-f(\xi))\right|^\beta}d\xi\\
&+\frac{C_\beta}{\pi\varepsilon}\sum_{m=1}^{N-1}e^{\frac{i2\pi m}{N}}\int_{\T}\frac{d\xi}{\left|\Phi(w)-e^{\frac{i2\pi m}{N}}\Phi(\xi)+l(1-e^{\frac{i2\pi m}{N}})\right|^\beta}\\
&+\frac{C_\beta \varepsilon^\beta}{\pi}\sum_{m=1}^{N-1}e^{\frac{i2\pi m}{N}}\int_{\T}\frac{f'(\xi)}{\left|\Phi(w)-e^{\frac{i2\pi m}{N}}\Phi(\xi)+l(1-e^{\frac{i2\pi m}{N}})\right|^\beta}d\xi\\
=:& I_1(\varepsilon,f)(w)+I_2(\varepsilon,f)(w)+I_3(\varepsilon,f)(w)+I_4(\varepsilon,f)(w),
\end{align*}
for $w\in\T$. 

In order to overcome the singularity in $\varepsilon$ of $I_1$ and $I_3$, we use Taylor formula. Indeed, taking any complex numbers $A$ and $B$ such that $|B|<|A|$ one has
\begin{equation}\label{taylor}
\frac{1}{|A+B|^\beta}=\frac{1}{|A|^\beta}-\beta\int_0^1\frac{\textnormal{Re}\left[A\overline{B}\right]+t|B|^2}{|A+tB|^{2+\beta}}dt.
\end{equation}
In the case of $I_1$, take $A=w-\xi$ and $B=\varepsilon^{1+\beta}(f(w)-f(\xi))$, implying
\begin{align*}
I_1(\varepsilon,f)(w)=&\frac{C_\beta}{\pi \varepsilon^{1+\beta}}\int_{\T}\frac{d\xi}{|w-\xi|^\beta}-\frac{\beta C_\beta}{\pi}\int_{\T}\int_0^1\frac{\textnormal{Re}\left[(w-\xi)\overline{(f(w)-f(\xi))}\right]+t\varepsilon^{1+\beta}|f(w)-f(\xi)|^2}{|w-\xi+t\varepsilon^{1+\beta}(f(w)-f(\xi))|^{2+\beta}}dt d\xi.
\end{align*}
In the same way, we can work with $I_3$ considering $z_1=l(1-e^{\frac{i2\pi m}{N}})$ and $z_2=\Phi(w)-e^{\frac{i2\pi m}{N}}\Phi(\xi)$ and thus obtaining
\begin{align*}
&I_3(\varepsilon,f)(w)=\frac{C_\beta}{\pi \varepsilon}\sum_{m=1}^{N-1}e^{\frac{i2\pi m}{N}}\int_{\T}\frac{d\xi}{|l(1-e^{\frac{i2\pi m}{N}})|^\beta}-\frac{\beta C_\beta}{\pi}\sum_{m=1}^{N-1}e^{\frac{i2\pi m}{N}}\\
&\times\int_{\T}\int_0^1\Bigg\{\frac{\textnormal{Re}\left[l(1-e^{\frac{i2\pi m}{N}})\overline{(w-e^{\frac{i2\pi m}{N}}\xi+\varepsilon^{1+\beta}(f(w)-e^{\frac{i2\pi m}{N}}f(\xi)))}\right]}{|l(1-e^{\frac{i2\pi m}{N}})+t(\Phi(w)-e^{\frac{i2\pi m}{N}}\Phi(\xi))|^{2+\beta}}\\
&+\frac{t\varepsilon|w-e^{\frac{i2\pi m}{N}}\xi+\varepsilon^{1+\beta}(f(w)-e^{\frac{i2\pi m}{N}}f(\xi))|^2}{|l(1-e^{\frac{i2\pi m}{N}})+t(\Phi(w)-e^{\frac{i2\pi m}{N}}\Phi(\xi))|^{2+\beta}}\Bigg\}dt d\xi\\
=&-\frac{\beta C_\beta}{\pi}\sum_{m=1}^{N-1}e^{\frac{i2\pi m}{N}}
 \int_{\T}\int_0^1\Bigg\{\frac{\textnormal{Re}\left[l(1-e^{\frac{i2\pi m}{N}})\overline{(w-e^{\frac{i2\pi m}{N}}\xi+\varepsilon^{1+\beta}(f(w)-e^{\frac{i2\pi m}{N}}f(\xi)))}\right]}{|l(1-e^{\frac{i2\pi m}{N}})+t(\Phi(w)-e^{\frac{i2\pi m}{N}}\Phi(\xi))|^{2+\beta}}\\
&+\frac{t\varepsilon|w-e^{\frac{i2\pi m}{N}}\xi+\varepsilon^{1+\beta}(f(w)-e^{\frac{i2\pi m}{N}}f(\xi))|^2}{|l(1-e^{\frac{i2\pi m}{N}})+t(\Phi(w)-e^{\frac{i2\pi m}{N}}\Phi(\xi))|^{2+\beta}}\Bigg\}dt d\xi.
\end{align*}
Hence, we can write the velocity field as
\begin{align*}
v_0&(\Phi(w)+l)=\frac{C_\beta}{\pi \varepsilon^{1+\beta}}\int_{\T}\frac{d\xi}{|w-\xi|^\beta}\\
&-\frac{\beta C_\beta}{\pi}\int_{\T}\int_0^1\frac{\textnormal{Re}\left[(w-\xi)\overline{(f(w)-f(\xi))}\right]+t\varepsilon^{1+\beta}|f(w)-f(\xi)|^2}{|w-\xi+t\varepsilon^{1+\beta}(f(w)-f(\xi))|^{2+\beta}}dt d\xi\\
&+\frac{C_\beta}{\pi}\int_{\T}\frac{f'(\xi)}{\left|w-\xi+\varepsilon^{1+\beta} (f(w)-f(\xi))\right|^\beta}d\xi\\
&-\frac{\beta C_\beta}{\pi}\sum_{m=1}^{N-1}e^{\frac{i2\pi m}{N}}\int_{\T}\int_0^1\Bigg\{\frac{\textnormal{Re}\left[l(1-e^{\frac{i2\pi m}{N}})\overline{(w-e^{\frac{i2\pi m}{N}}\xi+\varepsilon^{1+\beta}(f(w)-e^{\frac{i2\pi m}{N}}f(\xi)))}\right]}{|l(1-e^{\frac{i2\pi m}{N}})+t(\Phi(w)-e^{\frac{i2\pi m}{N}}\Phi(\xi))|^{2+\beta}}\\
&+\frac{t\varepsilon|w-e^{\frac{i2\pi m}{N}}\xi+\varepsilon^{1+\beta}(f(w)-e^{\frac{i2\pi m}{N}}f(\xi))|^2}{|l(1-e^{\frac{i2\pi m}{N}})+t(\Phi(w)-e^{\frac{i2\pi m}{N}}\Phi(\xi))|^{2+\beta}}\Bigg\}dt d\xi\\
&+\frac{C_\beta \varepsilon^\beta}{\pi}\sum_{m=1}^{N-1}e^{\frac{i2\pi m}{N}}\int_{\T}\frac{f'(\xi)}{\left|\Phi(w)-e^{\frac{i2\pi m}{N}}\Phi(\xi)+l(1-e^{\frac{i2\pi m}{N}})\right|^\beta}d\xi\\
=:&\frac{C_\beta}{\pi \varepsilon^{1+\beta}}\int_{\T}\frac{d\xi}{|w-\xi|^\beta}+J(\varepsilon,f)(w).
\end{align*}
That concludes the proof.
\end{proof}

By virtue of Proposition \ref{Prop-vel-g}, one can simplify the expression for the velocity field \eqref{eq1-g}. Indeed, from \cite{Hassa-Hmi} one has
\begin{equation*}
\int_{\T}\frac{d\xi}{|w-\xi|^{\beta}}=\frac{2\pi i\beta \Gamma(1-\beta)}{(2-\beta)\Gamma^2\left(1-\frac{\beta}{2}\right)}w,
\end{equation*}
and then
$$
\textnormal{Re}\left[w\overline{\int_{\T}\frac{d\xi}{|w-\xi|^\beta}}\right]=0.
$$

Hence, using the decomposition of $v_0$ given in Proposition \ref{Prop-vel-g}, one can write \eqref{eq1-g} as 
\begin{align}\label{equation-general}
\nonumber F_{\beta}(\Omega,\varepsilon,f)(w):=&\textnormal{Re}\left[\left\{\overline{J(\varepsilon,f)(w)}+i\Omega(\overline{\Phi(w)}+l)\right\}w(1+\varepsilon^{1+\beta} f'(w))\right]\\
&+\mu_\beta\textnormal{Im}\left[f'(w)\right]=0, \quad w\in\T,
\end{align}
where $J$ is defined in \eqref{J-g} and
$$
\mu_\beta:=\frac{C_\beta 2 \beta \Gamma(1-\beta)}{(2-\beta)\Gamma^2\left(1-\frac{\beta}{2}\right)}.
$$

The analogue of Lemma \ref{Lem-F-sym} is found here obtaining that $F_{\beta}$ can be written as a sum of sines. We omit the proof for its similarity.
\begin{lem}\label{Lem-F-sym-g}
If $(\Omega,\varepsilon)\in\R^2$ and $f\in \mathscr{C}^1$ satisfies 
\begin{equation}\label{f-exp-g}
f(w)=\sum_{n\geq 1}a_nw^{-n},\quad a_n\in\R,w\in\T,
\end{equation}
hence 
\begin{equation}\label{F-sym2-g}
F_{\beta}(\Omega,\varepsilon,f)(\overline{w})=-F_{\beta}(\Omega,\varepsilon,f)(w), \quad w\in\T.
\end{equation} 
That implies that
\begin{equation}\label{F-sym-g}
F_{\beta}(\Omega,\varepsilon,f)(e^{i\theta})=\sum_{n\geq 1}f_n \sin(\theta), \quad f_n\in\R, \theta\in[0,2\pi].
\end{equation}
\end{lem}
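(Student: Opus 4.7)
The plan is to mirror the proof of Lemma \ref{Lem-F-sym} step by step, handling the additional ingredients coming from the Taylor-expanded kernel and the integrals over $t\in[0,1]$. The key observation is again that if $f$ has the form \eqref{f-exp-g}, then $f(\overline{w})=\overline{f(w)}$ and $f'(\overline{w})=\overline{f'(w)}$; and that the map $\Phi(w)=\varepsilon(w+\varepsilon^{1+\beta}f(w))$ satisfies $\Phi(\overline{w})=\overline{\Phi(w)}$. Together with the identity $\overline{\int_{\mathbb{T}}g(\xi)d\xi}=-\int_{\mathbb{T}}\overline{g(\overline{\xi})}d\xi$ recalled in \eqref{property-sym-int}, these symmetries will let us verify the sign-flip \eqref{F-sym2-g}.

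First I would split $F_\beta$ according to \eqref{equation-general} into the three pieces: the $J(\varepsilon,f)$-term, the $\Omega$-term, and the $\mu_\beta\operatorname{Im}[f'(w)]$ piece. For the last two, exactly the same elementary manipulation as in Lemma \ref{Lem-F-sym} applies: since $\operatorname{Im}[f'(\overline{w})]=-\operatorname{Im}[f'(w)]$ and since $\overline{w}+\varepsilon^{1+\beta}f(\overline{w})=\overline{\Phi(w)/\varepsilon}$, one obtains at once that both expressions change sign under $w\mapsto\overline{w}$. The substantive work is therefore to prove the analogue of \eqref{J-sym}, namely
\begin{equation*}
J(\varepsilon,f)(\overline{w})=-\overline{J(\varepsilon,f)(w)},\qquad w\in\mathbb{T},
\end{equation*}
for $J=J_1+J_2+J_3+J_4$ defined in \eqref{J-g}.

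To do this, I would take the conjugate of each $J_j(\varepsilon,f)(w)$ and apply \eqref{property-sym-int} to move a conjugate inside the integrals (with $\xi\mapsto\overline{\xi}$). In $J_1$ and $J_2$ the denominators are moduli, so conjugation passes through trivially; one then recognizes the numerators, after using $\overline{f(\overline{\xi})}=f(\xi)$, as precisely $-J_1(\varepsilon,f)(\overline{w})$ and $-J_2(\varepsilon,f)(\overline{w})$. The delicate point is $J_3$ and $J_4$, which carry the factors $e^{i2\pi m/N}$ explicitly. After conjugation, these become $e^{-i2\pi m/N}$; performing the change of summation index $m\mapsto N-m$ reindexes the sum back to $m=1,\dots,N-1$ with $e^{i2\pi m/N}$ restored and moves the extra outer factor correctly. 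The reality of $l$ and the invariance $|\Phi(\overline{w})-e^{-i2\pi m/N}\Phi(\overline{\xi})|=|\Phi(w)-e^{i2\pi m/N}\Phi(\xi)|$ (after conjugating and reindexing) ensure that the $t$-integrals match term by term. Summing up, $-\overline{J(\varepsilon,f)(w)}=J(\varepsilon,f)(\overline{w})$.

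Combining these, \eqref{F-sym2-g} follows. To deduce \eqref{F-sym-g}, I would write the Fourier expansion $F_\beta(\Omega,\varepsilon,f)(e^{i\theta})=\sum_{n\in\mathbb{Z}}c_n e^{in\theta}$; since $F_\beta$ is real-valued one has $c_{-n}=\overline{c_n}$, and the symmetry \eqref{F-sym2-g}, which reads $F_\beta(e^{-i\theta})=-F_\beta(e^{i\theta})$, forces $c_{-n}=-c_n$. Together these give $c_n=-\overline{c_n}$, i.e. $c_n\in i\mathbb{R}$, and $c_0=0$. Writing $c_n=i b_n/2$ with $b_n\in\mathbb{R}$ and $b_{-n}=-b_n$, we obtain $F_\beta(\Omega,\varepsilon,f)(e^{i\theta})=\sum_{n\geq 1}f_n\sin(n\theta)$ with $f_n\in\mathbb{R}$, which is \eqref{F-sym-g}. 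The main obstacle I anticipate is the bookkeeping in $J_3$ with the $\int_0^1 dt$ factor and the combination of Taylor remainders with the sum over $m$; once the index change $m\mapsto N-m$ is performed carefully, the rest is a transcription of the argument of Lemma \ref{Lem-F-sym}.
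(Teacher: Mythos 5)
Your proof is correct and follows exactly the route the paper intends: the paper omits this proof by reference to Lemma \ref{Lem-F-sym}, and you carry out that transcription faithfully, including the two points that actually require care (the reality of the modulus denominators and $t$-integrals so that conjugation passes through, and the reindexing $m\mapsto N-m$ in $J_3$, $J_4$ to restore the factors $e^{i2\pi m/N}$). The concluding Fourier argument deducing the sine expansion from the antisymmetry and real-valuedness is also correct.
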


In the following result, we shall check that $(\Omega_0,0,0)$ is a trivial root of $F_{\beta}$ as happens for the formulation in the Euler equations. Recall that $\Omega_0$ in this case is given in Remark \ref{rem-sqg} by
\begin{equation}\label{Omega0-g}
\Omega_0=\frac{\beta C_\beta}{l^{2+\beta}}\sum_{k=1}^{N-1}\frac{1-e^{\frac{i2\pi k}{N}}}{|1-e^{\frac{i2\pi k}{N}}|^{2+\beta}}.
\end{equation}
\begin{pro}\label{Prop-trivial-g}
The following equation is verified $F_{\beta}(\Omega_0,0,0)(w)=0$, for any $w\in\T$, where $\Omega_0$ is defined in \eqref{Omega0-g}.
\end{pro}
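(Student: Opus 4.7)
My plan is to evaluate $F_\beta(\Omega_0, 0, 0)(w)$ directly by plugging $\varepsilon = 0$, $f = 0$ into the expression \eqref{equation-general} and reducing it to an algebraic identity that is precisely solved by the choice of $\Omega_0$ coming from the point vortex Proposition (Remark \ref{rem-sqg}). At $(\Omega_0, 0, 0)$ we have $\Phi(w) = \varepsilon(w + \varepsilon^{1+\beta}f(w)) = 0$ and $f'(w) = 0$, so the Taylor-correction term $\mu_\beta \text{Im}[f'(w)]$ vanishes and the factor $(1 + \varepsilon^{1+\beta} f'(w))$ reduces to $1$. Thus
\[
F_\beta(\Omega_0, 0, 0)(w) = \text{Re}\left[\bigl\{\overline{J(0,0)(w)} + i\Omega_0 l\bigr\} w\right],
\]
and the entire proof reduces to computing $J(0,0)(w)$ and verifying that $\overline{J(0,0)(w)} = -i\Omega_0 l$.

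The next step is to evaluate the four pieces $J_1, J_2, J_3, J_4$ of $J$ from \eqref{J-g} at $\varepsilon = 0, f = 0$. The terms $J_1$ and $J_2$ vanish trivially: every summand in $J_1$ contains the factors $(f(w)-f(\xi))$ or $|f(w)-f(\xi)|^2$ in the numerator, while $J_2$ has the factor $f'(\xi)$. Similarly, $J_4$ carries both an $\varepsilon^\beta$ prefactor and an $f'(\xi)$ factor. Hence only $J_3$ contributes, and at $\varepsilon = 0, f = 0$ its $t\varepsilon|\cdots|^2$ numerator vanishes, and $\Phi(w) = \Phi(\xi) = 0$. Thus
\[
J(0,0)(w) = -\frac{\beta C_\beta}{\pi}\sum_{m=1}^{N-1} \frac{e^{\frac{i2\pi m}{N}}}{|l(1-e^{\frac{i2\pi m}{N}})|^{2+\beta}} \int_\T \text{Re}\bigl[l(1-e^{\frac{i2\pi m}{N}})\overline{(w - e^{\frac{i2\pi m}{N}}\xi)}\bigr]\, d\xi.
\]

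The key calculation is the inner integral. Setting $B_m := l(1-e^{\frac{i2\pi m}{N}}) e^{-\frac{i2\pi m}{N}}$, the integrand is of the form $A - \text{Re}[B_m \overline{\xi}]$ with $A$ independent of $\xi$; parametrizing $\xi = e^{i\theta}$ and using the standard identities $\int_\T d\xi = 0$, $\int_0^{2\pi}\cos\theta \cdot i e^{i\theta}d\theta = i\pi$, $\int_0^{2\pi}\sin\theta \cdot i e^{i\theta}d\theta = -\pi$, one checks that $\int_\T \text{Re}[B_m \overline{\xi}]\, d\xi = i\pi B_m$, so the integral equals $-i\pi \, l(1-e^{\frac{i2\pi m}{N}}) e^{-\frac{i2\pi m}{N}}$. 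Plugging back gives
\[
J(0,0)(w) = \frac{i\beta C_\beta}{l^{1+\beta}}\sum_{m=1}^{N-1}\frac{1-e^{\frac{i2\pi m}{N}}}{|1-e^{\frac{i2\pi m}{N}}|^{2+\beta}} = i l \Omega_0,
\]
by the definition of $\Omega_0$ in \eqref{Omega0-g}. Since $l, \Omega_0 \in \mathbb{R}$, we conclude $\overline{J(0,0)(w)} = -i\Omega_0 l$, and therefore $\overline{J(0,0)(w)} + i\Omega_0 l = 0$, giving $F_\beta(\Omega_0,0,0)(w) = 0$ for every $w \in \T$.

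The proof is essentially a bookkeeping exercise; the only subtlety is the integral $\int_\T \text{Re}[B\overline{\xi}]\, d\xi = i\pi B$, which is not quite a residue computation (since the integrand is a real-valued function multiplied by the complex measure $d\xi$) but follows immediately by expanding $\text{Re}[B\overline{\xi}]$ in sines and cosines. I do not anticipate any real obstacle — the choice of $\Omega_0$ was designed precisely to make this identity hold, mirroring how $\Omega_0$ was derived in Proposition \ref{Prop-points-general} for the point vortex system.
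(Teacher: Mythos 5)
Your proof is correct and follows essentially the same route as the paper: reduce $F_\beta(\Omega_0,0,0)$ to $\textnormal{Re}[\{\overline{J(0,0)(w)}+i\Omega_0 l\}w]$, observe that only $J_3$ survives at $(\varepsilon,f)=(0,0)$, and evaluate the resulting integral to get $J(0,0)=il\Omega_0$. The only cosmetic difference is that you compute $\int_{\T}\textnormal{Re}[B\overline{\xi}]\,d\xi=i\pi B$ by expanding in sines and cosines, while the paper replaces $\textnormal{Re}[\,\cdot\,]$ by half the holomorphic part (the conjugate part integrating to zero) and applies the Residue theorem; both give the same value.
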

\begin{proof}
First, note that
$$
F_{\beta}(\Omega_0,0,0)(w)=\textnormal{Re}\left[\left\{\overline{J(0,0)(w)}+i\Omega_0 l\right\}w\right].
$$
We use the decomposition of $J$ given in \eqref{J-g} in order to compute $J(0,0)$. Indeed, one finds
$$
J(0,0)=J_3(0,0).
$$
Moreover,
\begin{align*}
J_3(0,0)=&-\frac{l\beta C_\beta}{\pi}\sum_{m=1}^{N-1}\frac{e^{\frac{i2\pi m}{N}}}{l^{2+\beta}|1-e^{\frac{i2\pi m}{N}}|^{2+\beta}}\int_{\T}\textnormal{Re}\left[(1-e^{\frac{i2\pi m}{N}})\overline{(w-e^{\frac{i2\pi m}{N}}\xi)}\right]d\xi\\
=&-\frac{l\beta C_\beta}{2\pi}\sum_{m=1}^{N-1}\frac{e^{\frac{i2\pi m}{N}}(1-e^{\frac{i2\pi m}{N}})}{l^{2+\beta}|1-e^{\frac{i2\pi m}{N}}|^{2+\beta}}\int_{\T}\overline{(w-e^{\frac{i2\pi m}{N}}\xi)}d\xi\\
=&il\beta C_\beta\sum_{m=1}^{N-1}\frac{(1-e^{\frac{i2\pi m}{N}})}{l^{2+\beta}|1-e^{\frac{i2\pi m}{N}}|^{2+\beta}}.
\end{align*}
Then, it is clear that
$$
\overline{J_3(0,0)(w)}+i\Omega_0l=0,
$$
implying
$$
F_{\beta}(\Omega_0,0,0)=0.
$$
\end{proof}

Here, we use the Banach spaces defined in \eqref{X}--\eqref{Y} with a suitable exponent for the H\"older spaces given by the singularity of the kernel. As for Euler equations, we need to fix here $\Omega$ depending on  $(\varepsilon,f)$ to have that $F_{\beta}\in Y_{1-\beta}$ if $(\varepsilon,f)\in\R\times X_{2-\beta}$. That is the analogue result to Proposition \ref{Prop-omega}.

\begin{pro}\label{Prop-omega-g}
For any $\sigma<1$,  and some $\varepsilon_0>0$, define the function $\Omega:(-\varepsilon_0,\varepsilon_0)\times B_{X_{2-\beta}}(0,\sigma)\rightarrow \R$, given by
\begin{equation}\label{Omega-exp-g}
\Omega(\varepsilon,f):=\frac{i\int_{\T}\overline{J(\varepsilon,f)(w)}(w-\overline{w})(1+\varepsilon^{1+\beta}f'(w))dw}{\int_{\T}(1+\varepsilon^{1+\beta} f'(w))(w-\overline{w})(l+\varepsilon \overline{w}+\varepsilon^{2+\beta} f(\overline{w}))dw}.
\end{equation}
Hence, it fulfills
\begin{itemize}
\item $\Omega$ is well--defined.
\item $\Omega(0,f)=\Omega_0$, where $\Omega_0$ is defined in \eqref{Omega0-g}.
\item We get that
\begin{equation}\label{F-prop-g}
\int_0^{2\pi}F_{\beta}(\Omega(\varepsilon,f),\varepsilon,f)(e^{i\theta})\sin(\theta)=0,
\end{equation}
for any $(\varepsilon,f)\in  (-\varepsilon_0,\varepsilon_0)\times B_{X_{2-\beta}}(0,\sigma)$ That implies that $f_1$ in \eqref{F-sym} vanishes.
\end{itemize}
\end{pro}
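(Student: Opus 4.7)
The strategy mirrors that of Proposition~\ref{Prop-omega}: the formula \eqref{Omega-exp-g} is obtained by solving \eqref{F-prop-g} as an affine equation in $\Omega$, which by construction forces the first sine Fourier mode of $F_{\beta}(\Omega(\varepsilon,f),\varepsilon,f)$ to vanish. Concretely, from \eqref{equation-general} one sees that $F_{\beta}$ is affine in $\Omega$, with $\Omega$-independent part $\textnormal{Re}[\overline{J(\varepsilon,f)(w)}\,w(1+\varepsilon^{1+\beta}f'(w))]+\mu_\beta\,\textnormal{Im}[f'(w)]$ and $\Omega$-coefficient $\textnormal{Re}[i(\overline{\Phi(w)}+l)w(1+\varepsilon^{1+\beta}f'(w))]$. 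The $\mu_\beta$ term contributes zero to $\int_{0}^{2\pi}(\cdot)\sin\theta\,d\theta$ because $f'(w)=-\sum_{n\geq 1}n\,a_n w^{-n-1}$, whose imaginary part on $\T$ has sine expansion starting at $\sin(2\theta)$. Rewriting $\sin\theta=(w-\overline{w})/(2i)$ and $d\theta=dw/(iw)$ and solving for $\Omega$ recovers \eqref{Omega-exp-g}. For well-definedness, at $\varepsilon=0$ the denominator equals $l\int_{\T}(w-\overline{w})\,dw=-2\pi i l\neq 0$ by the residue theorem; continuous dependence of $J$ on $(\varepsilon,f)$ in $\mathscr{C}^{1-\beta}$ provided by Lemma~\ref{Lem-pottheory} then allows one to shrink $\varepsilon_{0}$ so that the denominator remains bounded away from zero throughout $(-\varepsilon_{0},\varepsilon_{0})\times B_{X_{2-\beta}}(0,\sigma)$.

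Reality of $\Omega$ follows from the symmetry argument used in Lemma~\ref{Lem-F-sym-g}. Applying the involution $w\mapsto\overline{w}$ together with $\overline{f(w)}=f(\overline{w})$, the identity \eqref{property-sym-int} for complex integrals on $\T$, and the invariance $|w-\xi|=|\overline{w}-\overline{\xi}|$ of the $\textnormal{(SQG)}_{\beta}$ kernel, one checks that both numerator and denominator in \eqref{Omega-exp-g} are purely imaginary, as was the case in Proposition~\ref{Prop-omega}; their ratio therefore belongs to $\R$.

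The most delicate part is establishing $\Omega(0,f)=\Omega_{0}$. At $\varepsilon=0$ the denominator equals $-2\pi i l$. For the numerator, decompose $J=J_{1}+J_{2}+J_{3}+J_{4}$ as in \eqref{J-g}: $J_{4}(0,f)=0$ because of the explicit prefactor $\varepsilon^{\beta}$, and $J_{3}(0,f)=J_{3}(0,0)$ since every $f$-dependent term in $J_{3}$ is multiplied by $\varepsilon^{1+\beta}$ or $\varepsilon$. Proposition~\ref{Prop-trivial-g} yields $\overline{J_{3}(0,0)(w)}=-i l\Omega_{0}$, a constant in $w$ whose contribution to $i\int_{\T}\overline{J(0,f)}(w-\overline{w})\,dw$ is $i\cdot(-i l\Omega_{0})\cdot(-2\pi i)=-2\pi i l\Omega_{0}$, producing exactly the ratio $\Omega_{0}$. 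The crux is therefore to prove
\begin{equation*}
\int_{\T}\overline{J_{1}(0,f)(w)}(w-\overline{w})\,dw=0\qquad\textnormal{and}\qquad\int_{\T}\overline{J_{2}(0,f)(w)}(w-\overline{w})\,dw=0,
\end{equation*}
which I expect to be the main obstacle. These are the $\textnormal{(SQG)}_{\beta}$ counterparts of the Euler identities \eqref{int-1}--\eqref{int-2}, but now the kernel $|w-\xi|^{-\beta}$ is not holomorphic so the residue theorem cannot be applied directly. My approach would be to use $\overline{f(w)}=f(\overline{w})$ and \eqref{property-sym-int} to rewrite each $\overline{J_{j}(0,f)(w)}$ as an integral against $|w-\overline{\xi}|^{-\beta}$ or its $w$-derivative, apply Fubini, and expand the inner kernel $|w-a|^{-\beta}$ in Fourier series on $\T$. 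The outer $\xi$-integral then selects only Laurent modes $\xi^{0}$ and $\xi^{-2}$, neither of which appears in $f'(\xi)=-\sum_{n\geq 1}n\,a_n\xi^{-n-1}$, so both integrals vanish. Finally, \eqref{F-prop-g} holds by construction of $\Omega(\varepsilon,f)$, completing the proof.
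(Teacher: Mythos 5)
Your proposal follows essentially the same route as the paper: derive \eqref{Omega-exp-g} by solving the affine-in-$\Omega$ equation $f_1=0$, check reality by the conjugation symmetry of Lemma \ref{Lem-F-sym-g}, reduce $\Omega(0,f)=\Omega_0$ to the vanishing of the $J_1$ and $J_2$ contributions plus the explicit evaluation $J_3(0,f)=il\Omega_0$, and conclude. The only real difference is that the paper does not re-derive the kernel expansions: it quotes the explicit Fourier series \eqref{int-1-g}--\eqref{int-2-g} from \cite{H-M}, after which $\overline{(J_1+J_2)(0,f)}(w-\overline{w})$ visibly has no $w^{-1}$ mode and the residue theorem gives the vanishing; your plan to expand $|w-a|^{-\beta}$ from scratch and mode-count is the same computation done by hand, and it does work.

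One bookkeeping slip to fix: you assert that the outer integral selects the modes $\xi^{0}$ and $\xi^{-2}$, ``neither of which appears in $f'(\xi)$'' --- but $\xi^{-2}$ \emph{does} appear in $f'(\xi)=-\sum_{n\ge1}na_n\xi^{-n-1}$ (the $n=1$ term). The correct accounting is that the inner $w$-integral produces a combination of $\xi^{0}$ and $\xi^{-2}$ (since $\int_{\T}w\,|w-a|^{-\beta}dw\propto a^{2}=\overline{\xi}^{2}$ and $\int_{\T}\overline{w}\,|w-a|^{-\beta}dw$ is constant), so the contour integral $\int_{\T}f'(\xi)\cdot(c_1\xi^{-2}+c_2)\,d\xi$ picks out the $\xi^{+1}$ and $\xi^{-1}$ coefficients of $f'$, and it is \emph{these} that are absent. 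With that correction the argument closes, and it matches what \eqref{int-1-g}--\eqref{int-2-g} encode.
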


\begin{proof}
It is easy to check that $\Omega\in\R$ and thus it is well--defined. Let us verify the second statement. Note that
$$
\Omega(0,f)=\frac{i\int_{\T}\overline{J(0,f)(w)}(w-\overline{w})dw}{l\int_{\T}(w-\overline{w})dw}=-\frac{\int_{\T}\overline{J(0,f)(w)}(w-\overline{w})dw}{2\pi l}.
$$
From the decomposition of $J$ in \eqref{J-g} one has
\begin{align*}
J(0,f)(w)=&J_1(0,f)(w)+J_2(0,f)(w)+J_3(0,f)(w)\\
=&-\frac{\beta C_\beta}{\pi}\int_{\T}\frac{\textnormal{Re}\left[(w-\xi)\overline{(f(w)-f(\xi))}\right]}{|w-\xi|^{2+\beta}} d\xi\nonumber+\frac{C_\beta}{\pi}\int_{\T}\frac{f'(\xi)}{\left|w-\xi\right|^\beta}d\xi\\
&-\frac{\beta C_\beta}{\pi}\sum_{m=1}^{N-1}e^{\frac{i2\pi m}{N}}\int_{\T}\frac{\textnormal{Re}\left[l(1-e^{\frac{i2\pi m}{N}})\overline{(w-e^{\frac{i2\pi m}{N}}\xi)}\right]}{|l(1-e^{\frac{i2\pi m}{N}})|^{2+\beta}}d\xi.
\end{align*}

From \cite{H-M}, we can compute the previous integrals as
\begin{align}
\frac{\beta C_\beta}{2\pi i}\int_{\T}\frac{\textnormal{Re}\left[(w-\xi)\overline{(f(w)-f(\xi))}\right]}{|w-\xi|^{2+\beta}} d\xi=&\sum_{n\geq 1}a_n\left(\gamma_{1,n}w^{n+2}+\gamma_{2,n}\overline{w}^n\right),\label{int-1-g}\\
\frac{C_\beta}{2\pi i}\int_{\T}\frac{f'(\xi)}{\left|w-\xi\right|^\beta}d\xi=&-\frac{C_\beta \Gamma(1-\beta)}{\Gamma^2(1-\beta/2)}\sum_{n\geq 1}na_n\frac{\left(\frac{\beta}{2}\right)_n}{\left(1-\frac{\beta}{2}\right)_n}\overline{w}^n,\label{int-2-g}
\end{align}
where
\begin{align*}
\gamma_{1,n}:=&\frac{\beta (1+\beta/2) C_\beta \Gamma(1-\beta)}{2(2-\beta)\Gamma^2(1-\beta/2)}\left(1-\frac{\left(2+\frac{\beta}{2}\right)_n}{\left(2-\frac{\beta}{2}\right)_n}\right),\\
\gamma_{2,n}:=&-\frac{\beta C_\beta \Gamma(1-\beta)}{4\Gamma^2(1-\beta/2)}\left(1-\frac{\left(\frac{\beta}{2}\right)_n}{\left(-\frac{\beta}{2}\right)_n}\right).
\end{align*}
Hence, by the Residue theorem one finds
$$
\int_{\T}\overline{(J_1(0,f)+J_2(0,f))}(w-\overline{w})dw=0.
$$
Moreover, $J_3(0,f)$ reads as
\begin{align*}
J_3(0,f)=\beta C_\beta i\sum_{m=1}^{N-1}\frac{l(1-e^{\frac{i2\pi m}{N}})}{|l(1-e^{\frac{i2\pi m}{N}})|^{2+\beta}}=il\Omega_0,
\end{align*}
implying
\begin{align*}
\Omega(0,f)=-\frac{\int_{\T}\overline{J_3(0,f)(w)}(w-\overline{w})dw}{2\pi l}=il\Omega_0\frac{\int_{\T}(w-\overline{w})dw}{2\pi l}=\Omega_0.
\end{align*}
Finally, condition \eqref{Omega-exp-g} amouns to \eqref{F-prop-g}.
\end{proof}

Then, we can work with $\tilde{F_{\beta}}$ defined by 
$$
\tilde{F_{\beta}}(\varepsilon,f)(w)=F(\Omega(\varepsilon,f),\varepsilon,f)(w).
$$
instead of $F_{\beta}$. In this way, we can ensure the following
$$
\tilde{F_{\beta}}(\varepsilon,f)(e^{i\theta})=\sum_{n\geq 2}f_n\sin(n\theta),
$$
for $\theta\in[0,2\pi]$ and $f_n\in\R$. Thus, it remains to check the regularity properties of $\tilde{F_{\beta}}$.

Using Lemma \ref{Lem-pottheory} we can check that $J(\varepsilon,f):(-\varepsilon_0,\varepsilon_0)\times B_{X_{2-\beta}}\rightarrow \mathscr{C}^{1-\beta}(\T)$ is well--defined and $\mathscr{C}^1$ implying the well--definition of $\tilde{F_{\beta}}$. 

\begin{pro}\label{Prop-welldef-g}
For any $\sigma<1$, and some $\varepsilon_0>0$, $\tilde{F_{\beta}}:(-\varepsilon_0,\varepsilon_0)\times B_{X_{2-\beta}}(0,\sigma)\rightarrow Y_{1-\beta}$ is well--defined and $\mathscr{C}^1$.
\end{pro}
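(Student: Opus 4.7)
The plan is to decompose $F_\beta(\Omega,\varepsilon,f)$ according to the four integrals $J_1,J_2,J_3,J_4$ in \eqref{J-g} together with the explicit linear term $\mu_\beta\textnormal{Im}[f'(w)]$, and to verify for each piece both the mapping into $\mathscr{C}^{1-\beta}(\T)$ and the $\mathscr{C}^1$ dependence on $(\varepsilon,f)$. The target-space membership in $Y_{1-\beta}$ then follows from Lemma \ref{Lem-F-sym-g}, which forces the Fourier series of $F_\beta$ to consist only of sines, combined with Proposition \ref{Prop-omega-g}, whose definition of $\Omega(\varepsilon,f)$ kills the first sine coefficient. Since the denominator in \eqref{Omega-exp-g} equals $-2\pi i l^2\neq 0$ at $(\varepsilon,f)=(0,0)$ and depends continuously on the parameters, $\Omega(\varepsilon,f)$ is itself a $\mathscr{C}^1$ function on the domain after possibly shrinking $\varepsilon_0$, so $\tilde F_\beta(\varepsilon,f)=F_\beta(\Omega(\varepsilon,f),\varepsilon,f)$ retains $\mathscr{C}^1$ regularity by composition.

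A preliminary step is to fix $\sigma<1$ and $\varepsilon_0>0$ small enough so that for every admissible $(\varepsilon,f)$ the parametrization $\Phi$ from \eqref{conf-map-g} satisfies the bilipschitz bound
$$\tfrac12|w-\xi|\le|w-\xi+\varepsilon^{1+\beta}(f(w)-f(\xi))|\le 2|w-\xi|,$$
and the far-field denominators in $J_3$ and $J_4$ remain uniformly bounded below in absolute value. Both conditions are achievable because $X_{2-\beta}\hookrightarrow\mathscr{C}^1$ gives uniform control on $\|f'\|_{L^\infty}$ and $\varepsilon^{1+\beta}$ can be made arbitrarily small. Consequently, $J_3$ and $J_4$ have smooth integrands and produce $\mathscr{C}^\infty$ functions on $\T$ that depend smoothly on $(\varepsilon,f)$, so they cause no difficulty.

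The main obstacle is the regularity of the diagonal-singular pieces $J_1$ and $J_2$. For $J_2$, the kernel $K_2(w,\xi)=|w-\xi+\varepsilon^{1+\beta}(f(w)-f(\xi))|^{-\beta}$ is bounded by $C|w-\xi|^{-\beta}$ and its $w$-derivative by $C|w-\xi|^{-1-\beta}$ thanks to the bilipschitz estimate, which are precisely hypotheses (i)--(ii) of Lemma \ref{Lem-pottheory} with exponent $\beta\in(0,1)$; the lemma then places $J_2$ in $\mathscr{C}^{1-\beta}(\T)$ with norm controlled by $\|f'\|_{L^\infty}$. For $J_1$, the numerator $\textnormal{Re}[(w-\xi)\overline{(f(w)-f(\xi))}]+t\varepsilon^{1+\beta}|f(w)-f(\xi)|^2$ carries two powers of $|w-\xi|$, which partially cancel the $|w-\xi|^{2+\beta}$ of the denominator and leave an effective $|w-\xi|^{-\beta}$ singularity with gradient of order $|w-\xi|^{-1-\beta}$; the bound is uniform in the Taylor parameter $t\in[0,1]$. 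Lemma \ref{Lem-pottheory} then applies and $J_1\in\mathscr{C}^{1-\beta}(\T)$.

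Once the estimates on $J_1,J_2$ are established, multiplication by the factor $w(1+\varepsilon^{1+\beta}f'(w))\in\mathscr{C}^{1-\beta}(\T)$ preserves the Hölder class by the algebra property, and adding the explicit term $\mu_\beta\textnormal{Im}[f'(w)]\in\mathscr{C}^{1-\beta}(\T)$ completes the proof of $F_\beta\in\mathscr{C}^{1-\beta}(\T)$. The $\mathscr{C}^1$ character in $(\varepsilon,f)$ is obtained by differentiating the integrands under the integral sign: the resulting Gateaux-derivative kernels retain the same singularity profile $|w-\xi|^{-\beta}$ with $w$-derivatives of order $|w-\xi|^{-1-\beta}$, so Lemma \ref{Lem-pottheory} again yields continuity into $\mathscr{C}^{1-\beta}(\T)$. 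The hardest step of the whole argument is the kernel analysis for $J_1$, where the two-power cancellation from the numerator has to be tracked with care uniformly in $(\varepsilon,f,t)$; this is precisely the step for which the author refers to the closely related computations in \cite{H-M, G-KVS}.
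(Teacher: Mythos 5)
Your proposal is correct and follows essentially the same route as the paper, which itself omits the proof and simply invokes Lemma \ref{Lem-pottheory} for the singular pieces of $J$ together with the analogous computations in \cite{H-M, G-KVS}; your decomposition into $J_1,\dots,J_4$, the bilipschitz control of the modified kernels, and the use of Lemma \ref{Lem-F-sym-g} with Proposition \ref{Prop-omega-g} to land in $Y_{1-\beta}$ are exactly the intended argument, just written out in detail. One harmless slip: the denominator in \eqref{Omega-exp-g} at $(\varepsilon,f)=(0,0)$ equals $-2\pi i l$ rather than $-2\pi i l^2$ (since $\int_{\T}(w-\overline{w})\,dw=-2\pi i$), but it is nonzero for $l\in\R^\star$ either way, so your conclusion stands.
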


Finally, we can use the infinite dimensional Implicit Function theorem to $\tilde{F_{\beta}}$ to get the existence of the rotating polygon for patches. That is presented in the following theorem.

\begin{theo}\label{Th-sqg}
Consider $l\in\R^\star$ and $N\geq 2$. Then, there exists $\varepsilon_0>0$ with the following property. For all $\varepsilon\in(0,\varepsilon_0)$, there is a simply--connected bounded domain $D^\varepsilon$, with center of masses $l$, such that
$$
\theta_0(x)=\frac{1}{\pi \varepsilon^2}\sum_{m=0}^{N-1}{\bf 1}_{e^{\frac{i 2\pi m}{N}}D^\varepsilon} (x),
$$
defines a rotating solution of the generalized surface quasi--geostrophic equations for $\beta\in(0,1)$, with some constant angular velocity $\Omega(\varepsilon)$. Moreover, $D^\varepsilon$ is at least $\mathscr{C}^1$.
\end{theo}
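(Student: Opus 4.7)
The plan is to follow the strategy of Theorem \ref{Th-euler}, applying the infinite dimensional Implicit Function theorem to $\tilde{F}_\beta : (-\varepsilon_0,\varepsilon_0)\times B_{X_{2-\beta}}(0,\sigma) \to Y_{1-\beta}$ at the trivial solution $(\varepsilon,f)=(0,0)$. Three of the hypotheses are already in place: Proposition \ref{Prop-welldef-g} gives the $\mathscr{C}^1$ regularity and well--definition in the Banach setting, while Propositions \ref{Prop-trivial-g} and \ref{Prop-omega-g} combine to give $\tilde{F}_\beta(0,0)(w)=0$ for every $w\in\T$, since $\Omega(0,f)=\Omega_0$ identically in $f$. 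The latter identity also implies $\partial_f \Omega(0,0)=0$, so by the chain rule
$$
\partial_f \tilde{F}_\beta(0,0)h = \partial_f F_\beta(\Omega_0,0,0)h,
$$
and one does not have to worry about the $\Omega$ dependence when linearising.

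The next step is to compute this linearisation explicitly. Differentiating \eqref{equation-general} at $(\Omega_0,0,0)$ and recalling that $\Phi(w)\equiv 0$ there, only the two terms $w\,\overline{\partial_f J(0,0)h(w)}$ and $\mu_\beta\,\textnormal{Im}[h'(w)]$ survive. From the decomposition \eqref{J-g} one sees that the contributions of $J_3$ and $J_4$ are, respectively, independent of $f$ at $\varepsilon=0$ and of order $\varepsilon^\beta$, so they do not enter $\partial_f J(0,0)h$; thus $\partial_f J(0,0)h = \partial_f(J_1+J_2)(0,0)h$, which is precisely the quantity evaluated in the proof of Proposition \ref{Prop-omega-g} by means of \eqref{int-1-g} and \eqref{int-2-g}. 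Writing $h(w)=\sum_{n\geq 1} a_n w^{-n}\in X_{2-\beta}$ and taking real parts on $\T$, a direct but bookkeeping-heavy computation gives a diagonal expression
$$
\partial_f \tilde{F}_\beta(0,0)h(e^{i\theta}) = \sum_{n\geq 1} \Lambda_n(\beta)\, a_n \sin\bigl((n+1)\theta\bigr),
$$
where each $\Lambda_n(\beta)$ is an explicit real combination of $\gamma_{1,n}$, $\gamma_{2,n}$, the Pochhammer ratio $(\beta/2)_n/(1-\beta/2)_n$ from \eqref{int-2-g}, and the contribution $n\mu_\beta$ coming from the $\mu_\beta\,\textnormal{Im}[f'(w)]$ term in \eqref{equation-general}.

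The main obstacle, therefore, is to show that $\Lambda_n(\beta)\neq 0$ for every $n\geq 1$ and $\beta\in(0,1)$, and that $\Lambda_n(\beta)$ has the right growth in $n$ so that division by it exchanges $X_{2-\beta}$ and $Y_{1-\beta}$. The positivity (and leading order $\sim n\mu_\beta$) of the $\mu_\beta$-term typically dominates the $J$-contributions, which are uniformly bounded in $n$; the non--vanishing and asymptotics should follow from the Gamma--function identities and monotonicity properties of the Pochhammer symbols already exploited in \cite{Cas-Cor-Gom, Hassa-Hmi} for the one--fold V--states, and this is the only genuinely new analytic input beyond the Euler case. Once this spectral check is achieved, $\partial_f \tilde{F}_\beta(0,0)$ is a diagonal isomorphism $X_{2-\beta}\to Y_{1-\beta}$, the Implicit Function theorem yields a $\mathscr{C}^1$ curve $\varepsilon\mapsto f_\varepsilon$ of solutions to $\tilde{F}_\beta(\varepsilon,f_\varepsilon)=0$, and the boundary $\Phi(\T)+l$ defined by \eqref{conf-map-g} with $f=f_\varepsilon$ gives the sought-after simply--connected domain $D^\varepsilon$, concluding the proof.
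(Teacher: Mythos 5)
Your plan follows the paper's strategy almost verbatim up to the last step: same reduction to the Implicit Function theorem at $(0,0)$ via Propositions \ref{Prop-welldef-g}, \ref{Prop-trivial-g} and \ref{Prop-omega-g}, same observation that $\partial_f\Omega(0,0)=0$ kills the $\Omega$--dependence of the linearisation, same identification of $\partial_f J(0,0)h$ with the $J_1+J_2$ contribution computed through \eqref{int-1-g}--\eqref{int-2-g}, and the same diagonal form $\sum_{n\geq1}\Lambda_n(\beta)a_n\sin((n+1)\theta)$ (in the paper, $\Lambda_n(\beta)=\frac{C_\beta\beta\Gamma(1-\beta)}{2\Gamma^2(1-\beta/2)}\gamma_n$ with $\gamma_n>C_0n$, taken from the computations in \cite{H-M}). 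Where you diverge is in how the isomorphism is concluded, and that is where your argument has a genuine gap.

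You assert that once $\Lambda_n(\beta)\neq 0$ with $\Lambda_n\sim n\mu_\beta$ is established, ``division by it exchanges $X_{2-\beta}$ and $Y_{1-\beta}$'' and the operator is a diagonal isomorphism. In H\"older spaces this does not follow from pointwise non--vanishing and growth of the Fourier multipliers: a diagonal operator on $\mathscr{C}^{\alpha}$--type spaces with multipliers comparable to $n$ need not have a bounded inverse gaining one derivative, since Fourier multiplier bounds on $\mathscr{C}^{\alpha}(\T)$ require control of the whole symbol, not just its size. The paper avoids this issue entirely with a Fredholm argument: it writes $\partial_f\tilde{F_\beta}(0,0)h=\mu_\beta\,\textnormal{Im}[h'(w)]+\textnormal{Re}\bigl[\overline{K(h)(w)}w\bigr]$, notes that $h\mapsto h'$ is an isomorphism $X_{2-\beta}\to Y_{1-\beta}$ (hence Fredholm of index zero) and that $K$ is compact by the smoothing Lemma \ref{Lem-pottheory}, so the sum is Fredholm of index zero; then the \emph{only} thing left to check is triviality of the kernel, which is exactly the algebraic condition $\gamma_n\neq 0$, and surjectivity with bounded inverse comes for free from the open mapping theorem. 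If you want to keep your direct route you must supply a proof that the inverse multiplier operator is bounded $Y_{1-\beta}\to X_{2-\beta}$; otherwise you should replace that step by the compact--perturbation argument. (A minor additional slip: the $J$--contributions to $\Lambda_n$ are not uniformly bounded in $n$ but grow like $n^{\beta}$, via the Pochhammer ratios; they are still dominated by the $n\mu_\beta$ term, so your domination claim survives.)
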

\begin{proof}
From Proposition \ref{Prop-welldef-g} one has that $\tilde{F_{\beta}}:(-\varepsilon_0,\varepsilon_0)\times B_{X_{2-\beta}}(0,\sigma)\rightarrow Y_{1-\beta}$ is well--defined and $\mathscr{C}^1$. Moreover, from Propositions \ref{Prop-trivial-g} and \ref{Prop-omega-g} one gets $\tilde{F_{\beta}}(0,0)(w)=0$, for any $w\in\T$.

The next step to use the infinite dimensional Implicit Function theorem is showing that the linearized operator of $\tilde{F_{\beta}}$ around $(0,0)$ is an isomorphism. Furthermore, since $\partial_f \Omega(0,0)=0$ by Proposition \ref{Prop-omega-g} one achieves
$$
\partial_f \tilde{F_{\beta}}(0,0)h(w)=\partial_f {F_{\beta}}(\Omega_0,0,0)h(w).
$$
First, let us start computing $\partial_f J(0,0)$:
\begin{align*}
\partial_f J(0,0)h(w)=&-\frac{\beta C_\beta}{\pi}\int_{\T}\frac{\textnormal{Re}\left[(w-\xi)\overline{(h(w)-h(\xi))}\right]}{|w-\xi|^{2+\beta}} d\xi\nonumber+\frac{C_\beta}{\pi}\int_{\T}\frac{h'(\xi)}{\left|w-\xi\right|^\beta}d\xi,
\end{align*}
and then
\begin{align*}
\partial_f \tilde{F_{\beta}}(0,0)h(w)=\mu_{\beta}\textnormal{Im}[h'(w)]+\textnormal{Re}\left[\overline{K(h)(w)}w\right]+\partial_f \Omega(0,0) l\textnormal{Re}[iw],
\end{align*}
where
$$
K(h)(w):=-\frac{\beta C_\beta}{\pi}\int_{\T}\frac{\textnormal{Re}\left[(w-\xi)\overline{(h(w)-h(\xi))}\right]}{|w-\xi|^{2+\beta}} d\xi\nonumber+\frac{C_\beta}{\pi}\int_{\T}\frac{h'(\xi)}{\left|w-\xi\right|^\beta}d\xi.
$$
However, by virtue of Proposition \ref{Prop-omega-g} one has $\partial_f \Omega(0,0)=0$ and hence the last term vanishes having
\begin{align*}
\partial_f \tilde{F_{\beta}}(0,0)h(w)=\mu_{\beta}\textnormal{Im}[h'(w)]+\textnormal{Re}\left[\overline{K(h)(w)}w\right].
\end{align*}
Note now that $K:\mathscr{C}^{2-\beta}(\T)\rightarrow \mathscr{C}^{1-\beta}(\T)$ is a compact operator since it is smoothing (see for instance Lemma \ref{Lem-pottheory}). On the other hand $h\in X_{2-\beta}\mapsto h'\in Y_{1-\beta}$ is an isomorphism and thus it is a Fredholm operator of zero index. Hence, since compact perturbations of Fredholm operators remain Fredholm with same index, we have that $\partial_f \tilde{F_{\beta}}(0,0)$ is Fredholm of zero index. As a consequence, it is enough to check that is has a trivial kernel to show that it is an isomorphism.

Using \eqref{int-1-g}--\eqref{int-2-g} one achieves
\begin{align*}
\partial_f \tilde{F_{\beta}}(0,0)h(w)=\sum_{n\geq 1}a_n \sin((n+1)\theta)\left\{\mu_{\beta}+2(\gamma_{1,n}-\gamma_{2,n})-\frac{2nC_\beta \Gamma(1-\beta)}{\Gamma^2(1-\beta/2)}\frac{\left(\frac{\beta}{2}\right)_n}{\left(1-\frac{\beta}{2}\right)_n}\right\}.
\end{align*}
Finally, with the help of the computations done in \cite[Pages 726--728]{H-M}, one gets
\begin{align*}
\partial_f \tilde{F_{\beta}}(0,0)h(w)=\sum_{n\geq 1}\frac{C_\beta \beta\Gamma(1-\beta)}{2\Gamma^2(1-\beta/2)} a_n\gamma_n \sin((n+1)\theta),
\end{align*}
with
$$
\gamma_n:=\frac{2(1+n)}{1-\beta/2}-\frac{(1+\beta/2)_n}{(1-\beta/2)_n}-\frac{(1+\beta/2)_{n+1}}{(1-\beta/2)_{n+1}}.
$$
In order to have a trivial kernel, one needs that $\gamma_n\neq 0$ for any $n\geq 1$. Indeed by \cite[Page 728]{H-M} one has
$$
\gamma_n>\frac{2(1+n)}{1-\beta/2}-\frac{1+\beta/2}{2-\beta/2}\frac{n+\beta/2}{1-\beta/2}-\frac{1+\beta/2}{2-\beta/2}\frac{n+1+\beta/2}{1-\beta/2}>C_0 n,
$$
for some constant $C_0>0$. Hence, the kernel is trivial and we can ensure that $\partial_f \tilde{F_{\beta}}(0,0)$ is an isomorphism. 
\end{proof}

%%%%%%%%%%

\end{document}